\newtheorem{theorem}{Theorem}
\newtheorem{lemma}[theorem]{Lemma}
\newtheorem{corollary}[theorem]{Corollary}
\newtheorem{problem}{Problem}
\newcommand{\rt}{\right}
\newcommand{\lt}{\left}
\renewcommand{\epsilon}{\varepsilon}
\newcommand{\s}{\sigma}
\renewcommand{\a}{\alpha}
\renewcommand{\l}{\lambda}
\def\Aut{{\rm Aut}}
\def\AA{{\cal A}}
\def\FF{{\cal F}}
\def\II{{\cal I}}
\def\RRR{{\cal R}}
\def\NN{{\mathbb N}}
\def\RR{{\mathbb R}}
\def\dif{{\rm d}}
\def\oS{{\overline{S}}}
\def\cQ{{\mathcal{Q}}}
\begin{document}
\title{Finitely forcible graphons and permutons\thanks{The~work leading to this invention has received funding from the European Research Council under the European Union's Seventh Framework Programme (FP7/2007-2013)/ERC grant agreement no.~259385.}}
\author{Roman Glebov\thanks{Department of Mathematics, ETH, 8092 Zurich, Switzerland. E-mail: {\tt roman.l.glebov@gmail.com}. Previous affiliation: Mathematics Institute and DIMAP, University of Warwick, Coventry CV4 7AL, UK.}\and
        Andrzej Grzesik\thanks{Theoretical Computer Science Department, Faculty of Mathematics and Computer Science, Jagiellonian University, ul. Prof. St.  Lojasiewicza 6, 30-348 Krakow, Poland. E-mail: {\tt Andrzej.Grzesik@uj.edu.pl}. Partially supported by NCN grant 2011/01/N/ST1/02341.}\and
        Tereza Klimo{\v s}ov\'a\thanks{Mathematics Institute and DIMAP, University of Warwick, Coventry CV4 7AL, UK. E-mail: {\tt t.klimosova@warwick.ac.uk}.}\and
        Daniel Kr\'al'\thanks{Mathematics Institute, DIMAP and Department of Computer Science, University of Warwick, Coventry CV4 7AL, UK. E-mail: {\tt d.kral@warwick.ac.uk}.}}

\date{}
\maketitle

\begin{abstract}
We investigate when limits of graphs (graphons) and permutations (permutons)
are uniquely determined by finitely many densities of their substructures,
i.e., when they are finitely forcible. Every permuton can be associated
with a graphon through the notion of permutation graphs. We find permutons
that are finitely forcible but the associated graphons are not. We also
show that all permutons that can be expressed as a finite combination of
monotone permutons and quasirandom permutons are finitely forcible, which
is the permuton counterpart of the result of Lov\'asz and S\'os for graphons.
\end{abstract}

{\noindent \textbf{Keywords:} combinatorial limits, graph limits, permutations, quasirandomness}

\section{Introduction}

Analytic objects associated with convergent sequences of combinatorial objects
have recently been attracting a significant amount of attention. This line of
research was initiated by the theory of limits of dense graphs~\cite{bib-borgs08+,bib-borgs+,bib-borgs06+,bib-lovasz06+},
followed by limits of sparse graphs~\cite{bib-bollobas11+,bib-elek07},
permutations~\cite{bib-hkmrs1,bib-hkms2}, partial orders~\cite{bib-janson11}, etc.
Further details can also be found in a recent monograph on graph limits of Lov\'asz~\cite{bib-lovasz-book}.
Analytic methods applied to such limit objects led to results
in many areas of mathematics and computer science, most importantly,
extremal combinatorics~\cite{bib-flag1, bib-flag2, bib-flagrecent, bib-flag3, bib-flag4, bib-flag5, bib-flag6, bib-flag7, bib-flag8, bib-flag9, bib-flag10, bib-razborov07, bib-flag11, bib-flag12}
and property testing~\cite{bib-hoppen-test,bib-lovasz10+}.
In particular, Lov\'asz and Szegedy in~\cite{bib-lovasz10+}
characterize testable properties and parameters of graph limits (graphons) and
they identify a class of testable graph properties, so-called flexible properties.
In this paper, we focus on a question when the limit analytic object
is uniquely determined by finitely many densities of substructures.
This phenomenon is known as finite forcibility.

In fact, questions of this kind are closely related to quasirandomness and
they were studied well before the theory of limits of combinatorial objects emerged.
For example,
the results on quasirandom graphs from work of Chung, Graham and Wilson~\cite{bib-chung89+}, R\"odl~\cite{bib-rodl} and Thomason~\cite{bib-thomason, bib-thomason2}
imply that the homomorphic densities of $K_2$ and $C_4$ guarantee that densities of all subgraphs behave as in the random graph $G_{n,1/2}$.
In the language of graphons (the limit structure for graphs),
this result asserts that the graphon identically equal to $1/2$
is finitely forcible by densities of $4$-vertex subgraphs.
A similar result on permutations, which was originally raised as a question by Graham,
was proven by the last author and Pikhurko~\cite{bib-kral12+}
who exploited the analytic view of permutation limits.

Let us now give motivation for the concepts.
The result on finite forcibility of the graphon identically equal to $1/2$
was generalized by Lov\'asz and S\'os~\cite{bib-lovasz08+}
who proved that any stepwise graphon is finitely forcible.
These results were further extended by Lov\'asz and Szegedy~\cite{bib-lovasz11+}
who also gave several conditions when a graphon is not finitely forcible.

In this paper, we are interested in finite forcibility of graph limits (graphons) and
permutation limits (permutons). We start with proving an analogue of the result
of Lov\'asz and S\'os~\cite{bib-lovasz08+} for permutons,
which is stated as Corollary~\ref{cor-force}.
We then focus
on finite forcibility of permutons with infinite recursive structure, and
on the interplay between finite forcibility of permutons and graphons,
partly motivated by Question~11 from~\cite{bib-lovasz11+}.

A graph can be associated with a permutation in the following way: the vertices of the graph
correspond to the elements of the permutation and two of them are joined by an edge
if they form an inversion.
Along the same lines, a graphon can be associated with a permuton.
For example, the graphon associated with the limit of random permutations
is one of the candidates for a finite forcible ``2-dimensional'' graphon (see Section~\ref{sec:concl} for more details).
Unfortunately, we show that there exist finitely forcible permutons such that
the associated graphons are not finitely forcible.
In Section~\ref{sect-infinite},
we find two families of finitely forcible permutons,
which have infinite recursive structure.
Then, we show that the associated graphons are not finitely forcible in Section~\ref{sect-graphon},
where we prove this result for all graphons with the recursive structure analogous
to that of permutons from Section~\ref{sect-infinite}.
Still, we believe that some permutons leading to ``2-dimensional'' graphons
can be finitely forcible and we mention two particular cases at the end of the paper.
Also let us remark that
the methods we use in Section~\ref{sect-infinite}
were subsequently extended by Jan Volec and two of the authors~\cite{bib-comp} to resolve
Conjecture 9 from~\cite{bib-lovasz11+} on the compactness of finitely forcible graphons and
they were also used to resolve Conjecture 10 on the dimension of such graphons in~\cite{bib-inf}.

\section{Notation}
\label{sect-notation}

In this section, we introduce concepts related to graphs and permutations and
their limits used in the paper.
We start with the slightly simpler notion of permutation limits.

\subsection{Permutations and permutons}
\label{subs-permuton}

The theory of permutation limits was built by Hoppen, Kohayakawa, Moreira,
R\'ath and Sampaio in~\cite{bib-hkmrs1,bib-hkms2}. Here, we follow the analytic
view of the limit as used in~\cite{bib-kral12+}, which also appeared
in an earlier work of Presutti and Stromquist~\cite{bib-wrs}.

A {\em permutation of order} $n$ is a bijective mapping from $[n]$ to $[n]$,
where $[n]$ denotes the set of integers from $1$ to $n$. The order of a permutation $\pi$
is also denoted by $|\pi|$.
The set of all permutations of order $n$ is denoted by $S_n$.
In what follows, we identify a sequence of $n$ different integers $a_1\ldots a_n$ between $1$ and $n$
with a permutation $\pi$ by setting $\pi(i)=a_i$. For example, the identity permutation of order $4$
is denoted by $1234$.

If $\pi$ is a permutation of order $n$,
a {\em subpermutation\/} induced by $1\le i_1<\ldots<i_k\le n$ in $\pi$
is a permutation $\sigma$ of order $k$ such that $\sigma(j)<\sigma(j')$ if and only if $\pi(i_j)<\pi(i_{j'})$.
For example, the subpermutation of $7126354$ induced by $3,4,6$ is $132$.
Subpermutations are more commonly referred to as {\em patterns}
but we decided to use the term subpermutation in the paper
to be consistent with the analogous concept for graphs as well as
with previous work on permutation limits.
A {\em density\/} $d(\sigma,\pi)$ of a permutation $\sigma$ of order $k$
in a permutation $\pi$ of order $n$ is the number of $k$-tuples
inducing $\sigma$ in $\pi$ divided by ${n\choose k}$.
Conveniently, we set $d(\sigma,\pi)=0$ if $k>n$.

An infinite sequence $(\pi_i)_{i\in\NN}$ of permutations with $|\pi_i|\to\infty$
is {\em convergent} if $d(\sigma,\pi_i)$ converges for every permutation $\sigma$.
We will see that one can associate with every convergent sequence of permutations the following analytic
object:
a {\em permuton} is a probability measure $\mu$ on the $\sigma$-algebra $\AA$ of Borel sets
of the unit square $[0,1]^2$ such that $\mu$ has {\em uniform marginals},
i.e.,
$\mu\lt(\lt[\alpha,\beta\rt]\times [0,1]\rt)=\mu\lt([0,1]\times[\alpha,\beta]\rt)=\beta-\alpha$
for every $0\le\alpha\le\beta\le 1$. In what follows, we use $\lambda$
for the uniform measure on $\AA$.
More generally, if $A\subseteq [0,1]^2$ is a non-trivial convex polygon,
i.e., a convex polygon different from a point (however, which can be a segment),
we define $\lambda_{A}$ to be the unique probability measure on $\AA$
with support $A$ and
mass uniformly distributed inside $A$. In particular,
$\lambda_{[0,1]^2}=\lambda$.

We now describe the relation between permutons and convergent sequences of permutations.
Let $\mu$ be a permuton. For an integer $n$, one can sample $n$ points
$(x_1,y_1),\ldots,(x_n,y_n)$ in $[0,1]^2$ randomly based on $\mu$.
Because $\mu$ has uniform marginals,
the $x$-coordinates of all these points are mutually different with probability one.
The same holds for their $y$-coordinates. Assume that this is indeed the case.
One can then define a permutation $\pi$ of order $n$
based on the $n$ points $(x_1,y_1),\ldots,(x_n,y_n)$ as follows:
let $i_1,\ldots,i_n\in [n]$ be such that
$x_{i_1}<x_{i_2}<\cdots<x_{i_n}$ and define $\pi$ to be the unique bijective mapping
from $[n]$ to $[n]$ satisfying that $\pi(j)<\pi(j')$ if and only if $y_{i_j}<y_{i_{j'}}$.
We will say that a permutation $\pi$ of order $n$ obtained in the just described
way is a {\em $\mu$-random permutation\/} of order $n$.
A {\em uniformly random permutation} is a $\lambda$-random permutation,
i.e., each permutation of order $n$ is chosen with probability $1/n!$ at random.

If $\mu$ is a permuton and $\sigma$ is a permutation of order $n$,
then $d(\sigma,\mu)$ is the probability that a $\mu$-random permutation of order $n$ is $\sigma$.
We now recall the core results from~\cite{bib-hkmrs1,bib-hkms2}.
For every convergent sequence $(\pi_i)_{i\in\NN}$ of permutations,
there exists a unique permuton $\mu$ such that
\[d(\sigma,\mu)=\lim_{i\to\infty} d(\sigma,\pi_i)\mbox{ for every permutation $\s$.}\]
This permuton is the {\em limit\/} of the sequence $(\pi_i)_{i\in\NN}$.
On the other hand, if $\mu$ is a permuton and $\pi_i$ is a $\mu$-random permutation of order $i$,
then with probability one the sequence $(\pi_i)_{i\in\NN}$ is convergent and $\mu$ is its limit.

\begin{figure}
\begin{center}
\epsfbox{permforce.1} \hskip 10mm
\epsfbox{permforce.2} \hskip 10mm
\epsfbox{permforce.3} \hskip 10mm
\epsfbox{permforce.4}
\end{center}
\caption{The limits of sequences $\lt(\pi^1_i\rt)_{i\in\NN}$, $\lt(\pi^2_i\rt)_{i\in\NN}$, $\lt(\pi^3_i\rt)_{i\in\NN}$ and $\lt(\pi^4_i\rt)_{i\in\NN}$
         from Subsection~\ref{subs-permuton}.}\label{fig-permuton}
\end{figure}

We now give four examples of the just defined notions (the corresponding permutons are depicted
in Figure~\ref{fig-permuton}).
Let us consider a sequence $\lt(\pi^1_i\rt)_{i\in\NN}$ such that $\pi^1_i$ is the identity permutation of order $i$,
i.e., $\pi^1_i(k)=k$ for $k\in [i]$. This sequence is convergent and its limit is the measure $\lambda_A$
where $A=\lt\{(x,x), x\in [0,1]\rt\}$.
Similarly, the limit of a sequence $\lt(\pi^2_i\rt)_{i\in\NN}$, where $\pi^2_i$ is the permutation of order $i$
defined as $\pi^2_i(k)=i+1-k$ for $k\in [i]$, is $\lambda_B$ where $B=\lt\{(x,1-x), x\in [0,1]\rt\}$.
A little bit more complicated example is the following: the sequence $\lt(\pi^3_i\rt)_{i\in\NN}$,
where $\pi^3_i$ is the permutation of order $2i$ defined as
\[\pi^3_i(k)=\left\{
             \begin{array}{ll}
             2k-1 & \mbox{if $k\in [i]$,}\\
	     2(k-i) & \mbox{otherwise}
	     \end{array}
             \right.\;
\]
is convergent and the limit of the sequence is the measure $\frac{1}{2}\l_C+\frac{1}{2}\l_D$,
where $C=\lt\{(x/2,x), x\in [0,1]\rt\}$ and $D=\lt\{((x+1)/2,x), x\in [0,1]\rt\}$.
Next, consider a sequence $(\pi^4_i)_{i\in\NN}$ such that $\pi^4_i$ is a random permutation.
This sequence is convergent with probability one and its limit is the measure $\lambda$.

A permuton $\mu$ is {\em finitely forcible\/}
if there exists a finite set $S$ of permutations such that
every permuton $\mu'$ satisfying $d(\sigma,\mu)=d(\sigma,\mu')$ for every $\sigma\in S$
is equal to $\mu$.
For example, the following result from~\cite{bib-kral12+} asserts that the random permuton is finitely forcible with $S=S_4$.
\begin{theorem}
\label{thm-quasirandom}
Let $\mu$ be a permuton.
It holds that $d(\sigma,\mu)=1/24$ for every $\sigma\in S_4$
if and only if $\mu=\lambda$.
\end{theorem}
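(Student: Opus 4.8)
The plan is to show both directions. The easy direction: if $\mu=\lambda$, then a $\lambda$-random permutation of order $4$ is uniformly distributed over $S_4$, so $d(\sigma,\lambda)=1/24$ for every $\sigma\in S_4$. The content is in the converse, so assume $\mu$ is a permuton with $d(\sigma,\mu)=1/24$ for all $\sigma\in S_4$; I must deduce $\mu=\lambda$. The first step is a standard averaging observation: density of a permutation of order $k<4$ is a fixed positive linear combination of densities of permutations of order $4$ (summing the ways a $k$-pattern extends to a $4$-pattern). Hence the hypothesis forces $d(\sigma,\mu)=1/k!$ for all $\sigma\in S_k$, $k\le 4$; in particular $d(12,\mu)=d(21,\mu)=1/2$ and all six densities of order-$3$ patterns equal $1/6$.

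The core idea is to use these constraints to pin down the measure $\mu$ on a generating family of Borel sets, namely axis-parallel boxes $[a,b]\times[c,d]$, and conclude $\mu=\lambda$ since such boxes generate $\AA$ and $\lambda$ is the product measure. Concretely, for a point $t\in(0,1)$ consider the ``quadrant'' function $q(t) = \mu([0,t]\times[0,t])$; uniform marginals give $\mu([0,t]\times[0,1])=\mu([0,1]\times[0,t])=t$. I would express $d(\sigma,\mu)$ for suitable small $\sigma$ as integrals of such quadrant masses against the marginals. For instance, sampling two points, the probability that the $x$-smaller point is also $y$-smaller is $d(12,\mu)$, and writing this by conditioning on the first point's position one gets an integral identity; forcing it to equal $1/2$ already says the ``upper-left'' and ``lower-right'' masses balance. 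Pushing to order $3$ — patterns like $123$, $321$, $132$, etc. — yields integral equations in $q$ (and in the two-point correlation measure $\mu\times\mu$ restricted to ordered configurations) whose only solution consistent with all six equalities $=1/6$ is $q(t)=t^2$, and more: the joint law of two independent $\mu$-samples, conditioned on their $x$-order, must agree with that of two independent $\lambda$-samples. Iterating the same scheme with order-$4$ patterns and a Cauchy–Schwarz / variance argument shows the conditional law of a $\mu$-sample given a prefix of sampled points has no fluctuation, i.e., $\mu$ restricted to any box has the mass the product measure predicts. Equivalently: define $f(x,y)=\mu([0,x]\times[0,y])$; the equations force $f$ to satisfy $f(x,y)=xy$ on a dense set, hence everywhere by monotonicity, which is exactly $\mu=\lambda$.

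The main obstacle is the second half of the previous paragraph: order-$3$ densities alone do not force quasirandomness (the analogue of why $C_4$, not $C_3$, is needed for graphs), so one must genuinely exploit all of $S_4$ and combine the resulting quadratic identities with a convexity/Cauchy–Schwarz inequality that is tight only for the product measure. Setting up the right functional — some quantity of the form $\int (f(x,y)-xy)^2$ or a second-moment of a quadrant count — and showing the $S_4$ constraints force it to vanish is the technical heart. Everything else (reduction to lower orders by averaging, and passing from boxes to all Borel sets via a standard $\pi$–$\lambda$ argument, using that $\lambda_{[0,1]^2}=\lambda$) is routine. I would organize the write-up as: (i) averaging reduction; (ii) translate $S_3$- and $S_4$-densities into integral identities for $f$; (iii) the key inequality forcing the ``variance'' to zero; (iv) conclude $f(x,y)=xy$ and hence $\mu=\lambda$.
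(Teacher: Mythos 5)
First, note that the paper does not prove Theorem~\ref{thm-quasirandom} at all: it is quoted from Kr\'al'--Pikhurko~\cite{bib-kral12+}, so there is no in-paper argument to compare against line by line. Your outline has the right general shape (the averaging reduction to orders $k\le 4$ is correct, the forward direction is trivial, and passing from $F_\mu\equiv F_\lambda$ to $\mu=\lambda$ is indeed routine, as the paper itself remarks), and the objects you focus on -- $F_\mu(x,y)$ versus $xy$ and a variance-type functional -- are the right ones.

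However, there is a genuine gap exactly at what you yourself call ``the technical heart,'' and it is not a gap that the surrounding routine steps can absorb. The natural nonnegative functional you propose, $\int (F_\mu(x,y)-xy)^2$, is \emph{not} expressible as a linear combination of densities of permutations in $S_4$: by the point-counting in Theorems~\ref{thm-express} and~\ref{thm-express-mu}, the term $x^2y^2$ alone already forces sampling $5$ points when integrating against $\dif\mu$ (and $6$ against $\dif\lambda$), so the identity you would need lives in $S_5$ or $S_6$. A second-moment/Cauchy--Schwarz argument of this kind readily shows that $\lambda$ is forced by \emph{five}-point densities; getting down to four-point densities is precisely the content of the Kr\'al'--Pikhurko theorem, and it requires a genuinely different device (their argument extracts, from the degree-$\le 3$ polynomial identities in the quadrant masses $F$, $x-F$, $y-F$, $1-x-y+F$ that $S_4$-densities do provide, enough information to force $F(x,y)=xy$ without ever writing down a degree-$4$ square). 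As written, your proposal asserts the existence of a suitable sum-of-squares certificate within $S_4$ without exhibiting one, and the obvious candidate demonstrably fails the degree count; so the proof is not complete. Your parenthetical that $S_3$ does not suffice is correct, but it only shows the problem is nontrivial, not how to solve it.
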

If $\mu$ is a permuton, then $F_{\mu}$ is the function from $[0,1]^2$ to $[0,1]$
defined as $F_{\mu}(x,y)=\mu\lt([0,x]\times [0,y]\rt)$. For example, if $\mu=\lambda$, then $F_{\mu}(x,y)=xy$.
Observe that $F_{\mu}$ is always a continuous function satisfying $F_{\mu}(\xi,1)=F_{\mu}(1,\xi)=\xi$ for every $\xi\in [0,1]$.
Furthermore, notice that $\mu\neq\mu'$ implies $F_\mu\neq F_{\mu'}$, that is, the function $F_\mu$ determines the permuton $\mu$.

The next theorem was implicitly proven in~\cite{bib-kral12+}. We include its proof for completeness.
\begin{theorem}
\label{thm-express}
Let $p(x,y)$ be a polynomial and $k$ a non-negative integer. There exist a finite set $S$ of permutations and coefficients $\gamma_\sigma$, $\sigma\in S$,
such that
\begin{equation}
\int_{[0,1]^2} p(x,y)F_{\mu}^k(x,y)\dif\lambda=\sum_{\sigma\in S}\gamma_\sigma d(\sigma,\mu)\label{eq-express}
\end{equation}
for every permuton $\mu$.
\end{theorem}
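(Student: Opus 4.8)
The plan is to reduce the left-hand side of~\eqref{eq-express} to a probabilistic statement about $\mu$-random permutations, which by definition is expressible via the densities $d(\sigma,\mu)$. The key observation is that $F_\mu(x,y)=\mu([0,x]\times[0,y])$ is exactly the probability that a single $\mu$-random point $(X,Y)$ satisfies $X\le x$ and $Y\le y$. Hence, if we sample $k$ further independent $\mu$-random points $(X_1,Y_1),\dots,(X_k,Y_k)$, then $F_\mu^k(x,y)$ is the probability that all of them land in $[0,x]\times[0,y]$, i.e.\ that each $X_j\le x$ and each $Y_j\le y$.

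First I would handle the case where $p(x,y)=x^ay^b$ is a monomial; the general case follows by linearity, collecting the resulting sets $S$ into one finite set and adding the coefficients. For a monomial, I would interpret $\int_{[0,1]^2} x^a y^b F_\mu^k(x,y)\,\dif\lambda$ as follows: sample a point $(x,y)$ uniformly in $[0,1]^2$ (this is the $\dif\lambda$ integration), sample $a+b$ independent uniformly random reals to witness $x^ay^b$ as the probability that $a$ of them are $\le x$ and the other $b$ are $\le y$, and sample $k$ independent $\mu$-random points $(X_j,Y_j)$ as above. Then the whole integral equals the probability of a certain event $E$ about the relative order of the $x$-coordinates $x, u_1,\dots,u_a$ together with the $X_j$'s, and independently the relative order of the $y$-coordinates $y, v_1,\dots,v_b$ together with the $Y_j$'s. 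Now I would condition on the order type of the $k$ $\mu$-random points among themselves, i.e.\ on which permutation $\sigma\in S_k$ they induce: conditioned on $\{(X_j,Y_j)\}$ inducing $\sigma$, the positions of the uniform points $x,u_i,y,v_i$ relative to them are determined by a product of (conditional) uniform distributions on the appropriate cells, so $\Pr[E\mid \sigma]$ is a rational number depending only on $\sigma$, $a$, $b$, and $k$. Setting $\gamma_\sigma$ to be $\Pr[E\mid\sigma]$ times the appropriate binomial/multinomial normalizing factor, and noting $\Pr[\{(X_j,Y_j)\}\text{ induces }\sigma]=d(\sigma,\mu)$, gives exactly an expression of the form $\sum_{\sigma\in S_k}\gamma_\sigma d(\sigma,\mu)$ with $S=S_k$.

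An alternative and perhaps cleaner route, which I would use if the conditioning bookkeeping becomes unwieldy: express $\int x^a y^b F_\mu^k\,\dif\lambda$ directly as a density in a larger permuton. Observe that $F_\mu^k$ itself, integrated against a polynomial, can be built up inductively: each factor $F_\mu$ introduces one more $\mu$-random point and the constraint that it lies to the lower-left of the ``reference'' point, and each power of $x$ (resp.\ $y$) introduces one more uniformly random coordinate constrained to lie to the left (resp.\ below). The total integral is then the probability that a mixed sample of $1$ uniform point, $a+b$ uniform coordinates, and $k$ $\mu$-random points realizes a prescribed pattern; decomposing over the pattern induced by the $\mu$-random points alone yields the claim.

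The main obstacle I expect is purely organizational rather than conceptual: carefully setting up the probability space so that the uniform ``witness'' coordinates for $x^a$ and $y^b$ are genuinely independent of the $\mu$-random sample and of each other, and then verifying that conditioning on the induced pattern $\sigma$ of the $\mu$-random points really does make the remaining conditional probability a constant independent of $\mu$ (this uses crucially that $\mu$ has uniform marginals, so that conditioned on $(X_j,Y_j)$ lying in a given cell of the grid they cut out, the marginal laws of the coordinates are still uniform on the relevant subintervals). Once that independence-and-uniformity bookkeeping is pinned down, extracting the coefficients $\gamma_\sigma$ is a finite rational computation, and summing over monomials of $p$ completes the proof.
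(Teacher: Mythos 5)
Your setup (reduction to monomials by linearity, reading $F_\mu^k(x,y)$ as the probability that $k$ independent $\mu$-random points land in $[0,x]\times[0,y]$) matches the paper's, but the decisive step is wrong. You condition on the pattern $\sigma\in S_k$ induced by the $k$ $\mu$-random points alone and claim that $\Pr[E\mid\sigma]$ is a constant depending only on $\sigma$, $a$, $b$, $k$, so that $S=S_k$ suffices. It is not a constant: given the points, the event $E$ requires $x\ge\max_l X_l$ and $y\ge\max_l Y_l$, so the conditional probability of $E$ is a function of the actual values of these maxima, and their joint law conditioned on the induced pattern still depends on $\mu$ (the pattern records only ranks, not positions, and in particular not the coupling between the $x$- and $y$-coordinates of the sample). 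The case $a=b=0$, $k=1$ already refutes the claim: $S_1$ consists of the single permutation $1$ with $d(1,\mu)=1$ for every $\mu$, so your formula would force $\int_{[0,1]^2}F_\mu\,\dif\lambda$ to be the same constant for all permutons, yet it equals $1/4$ for $\mu=\lambda$ and $1/3$ for the increasing diagonal permuton, where $F_\mu(x,y)=\min(x,y)$. Your ``alternative route'' ends with the same move (``decomposing over the pattern induced by the $\mu$-random points alone'') and fails for the same reason. The parenthetical justification you offer --- that conditioned on the pattern the coordinates are still uniform on subintervals --- is also false; already for $k=1$ the conditioning is vacuous and the law of $(X_1,Y_1)$ is just $\mu$.

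The missing idea is that every source of randomness must itself be realized by $\mu$-random points, so that the entire event becomes a function of a single $\mu$-random pattern. Because $\mu$ has uniform marginals, each coordinate of a $\mu$-random point is uniform on $[0,1]$; hence the uniform integration variable $(x,y)$ can be obtained as the first coordinate of one extra $\mu$-random point together with the second coordinate of another, and the $a+b$ witnesses for the monomial $x^ay^b$ should likewise be taken as $\mu$-random points of which only one coordinate is used. The event $E$ is then determined by the permutation induced by all $a+b+k+2$ points, which is a $\mu$-random pattern of that order, so the integral equals $\sum_{\sigma\in S_{a+b+k+2}}\gamma_\sigma d(\sigma,\mu)$ with purely combinatorial coefficients $\gamma_\sigma$. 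This is exactly the paper's argument; with that substitution the rest of your bookkeeping goes through.
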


\begin{proof}
By additivity, it suffices to consider the case $p(x,y)=x^\alpha y^\beta$ for non-negative integers $\alpha$ and $\beta$.
Fix a permuton $\mu$.
Since $\mu$ has uniform marginals,
the product $x^\alpha y^\beta F_{\mu}^k(x,y)$ for $(x,y)\in [0,1]^2$ is equal to the probability that
out of $\alpha+\beta+k$ points are chosen randomly independently based on $\mu$,
the first $\alpha$ points belong to $[0,x]\times [0,1]$,
the next $\beta$ points belong to $[0,1]\times [0,y]$, and
the last $k$ points belong to $[0,x]\times [0,y]$.
So, the integral in (\ref{eq-express}) is equal to the probability that the above holds for a uniform choice of a point $(x,y)$ in $[0,1]^2$.

Since $\mu$ is a measure with uniform marginals,
a point $(x,y)$ uniformly distributed in $[0,1]^2$ can be obtained by sampling two points randomly independently based on $\mu$ and
setting $x$ to be the first coordinate of the first of these two points and $y$ to be the second coordinate of the second point.
Thus, we can consider the following random event.
Let us choose $\alpha+\beta+k+2$ points independently at random based on $\mu$
and denote by $x$ the first coordinate of the last but one point, and by $y$ is the second coordinate of the last point.
Then the integral on the left hand side of (\ref{eq-express})
is equal to the probability that
the first $\alpha$ points belong to $[0,x]\times [0,1]$,
the next $\beta$ points belong to $[0,1]\times [0,y]$, and
the following $k$ points belong to $[0,x]\times [0,y]$.
We conclude that the equation (\ref{eq-express}) holds with $S=S_{\alpha+\beta+k+2}$ and
$\gamma_\sigma$ equal to the probability that the following holds for a random permutation $\pi$ of order $\alpha+\beta+k+2$:
$\pi(i)\le\pi(\alpha+\beta+k+1)$ for $i\leq \alpha$ and for $\alpha+\beta+1\leq i \leq\alpha+\beta+k$, and
$\sigma(\pi(i))\le\sigma(\pi(\alpha+\beta+k+2))$ for $\alpha+1\leq i \leq\alpha+\beta+k$.
\end{proof}

Instead of sampling two additional points to get a random point with respect to the uniform measure $\lambda$,
we can also sample just a single point, which is a random point with respect to $\mu$.
This gives the following.

\begin{theorem}
\label{thm-express-mu}
Let $p(x,y)$ be a polynomial and $k$ a non-negative integer. There exist a finite set $S$ of permutations and coefficients $\gamma_\sigma$, $\sigma\in S$,
such that
\begin{equation}
\int_{[0,1]^2} p(x,y)F_{\mu}^k(x,y)\dif\mu=\sum_{\sigma\in S}\gamma_\sigma d(\sigma,\mu)
\end{equation}
 for every permuton $\mu$.
\end{theorem}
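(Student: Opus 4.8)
The plan is to follow the proof of Theorem~\ref{thm-express} almost line for line, replacing the two auxiliary points that were used there to manufacture a $\lambda$-uniform point $(x,y)$ by a single $\mu$-random point that supplies both coordinates at once. By additivity of the integral it is enough to establish the claim for monomials $p(x,y)=x^\alpha y^\beta$ with $\alpha,\beta$ non-negative integers, so fix such $\alpha,\beta$ and a permuton $\mu$.

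First I would recall the identity already extracted in the proof of Theorem~\ref{thm-express}: since $\mu$ has uniform marginals, for each $(x,y)\in[0,1]^2$ the quantity $x^\alpha y^\beta F_\mu^k(x,y)$ equals the probability that, sampling $\alpha+\beta+k$ points independently according to $\mu$, the first $\alpha$ of them fall in $[0,x]\times[0,1]$, the next $\beta$ of them fall in $[0,1]\times[0,y]$, and the last $k$ of them fall in $[0,x]\times[0,y]$. Integrating this against $\mu$ amounts to additionally choosing $(x,y)$ itself as an independent $\mu$-random point. Hence $\int_{[0,1]^2}x^\alpha y^\beta F_\mu^k\,\dif\mu$ is the probability of the following event when $(x_1,y_1),\dots,(x_n,y_n)$ with $n:=\alpha+\beta+k+1$ are sampled independently according to $\mu$: writing $(x,y):=(x_n,y_n)$, we have $x_i\le x$ for $i\le\alpha$, $y_i\le y$ for $\alpha+1\le i\le\alpha+\beta$, and both $x_i\le x$ and $y_i\le y$ for $\alpha+\beta+1\le i\le\alpha+\beta+k$.

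Finally I would translate this probability into a combination of subpermutation densities. With probability one all $x$-coordinates are distinct and all $y$-coordinates are distinct, so the $n$ sampled points induce a $\mu$-random permutation $\sigma\in S_n$; moreover, since the order in which the points are sampled is independent of their positions, the map sending the sampling index of a point to its rank among the $x$-coordinates is a uniformly random $\pi\in S_n$ independent of $\sigma$. Then $x_i\le x$ is equivalent to $\pi(i)\le\pi(n)$, and $y_i\le y$ is equivalent to $\sigma(\pi(i))\le\sigma(\pi(n))$, so the event above has probability $\sum_{\sigma\in S_n}\gamma_\sigma d(\sigma,\mu)$, where $\gamma_\sigma$ is the probability that a uniformly random $\pi\in S_n$ satisfies $\pi(i)\le\pi(n)$ for $i\le\alpha$ and for $\alpha+\beta+1\le i\le\alpha+\beta+k$, and $\sigma(\pi(i))\le\sigma(\pi(n))$ for $\alpha+1\le i\le\alpha+\beta+k$. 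This yields the desired identity with $S=S_{\alpha+\beta+k+1}$. The only point needing a little care — and it is exactly the subtlety present in Theorem~\ref{thm-express} — is to keep the two independent sources of randomness straight: the pattern $\sigma$ of the sampled points (which is what $d(\sigma,\mu)$ refers to) versus the auxiliary uniform permutation $\pi$ recording which sampled point plays which role; conflating them would produce a wrong coefficient. Everything else is a routine repetition of the earlier argument.
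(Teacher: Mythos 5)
Your proposal is correct and follows exactly the route the paper intends: the paper gives no separate proof of Theorem~\ref{thm-express-mu}, only the remark that one replaces the two auxiliary $\mu$-points of Theorem~\ref{thm-express} by a single $\mu$-random point supplying both thresholds, and your argument (with $S=S_{\alpha+\beta+k+1}$ and the stated coefficients $\gamma_\sigma$) is precisely the fleshed-out version of that remark. The care you take to separate the $\mu$-random pattern $\sigma$ from the independent uniform labelling $\pi$ is exactly the right point to emphasize.
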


Let now $\oS_k$ be the set of permutations of order $k$ with one distinguished element;
we call such permutations {\em rooted\/}.
To denote rooted permutations, we add a bar above the distinguished element: e.g.,
if the second element of the permutation $2341$ is distinguished,
we write $2\overline{3}41$. Note that $\lt|\oS_k\rt|=k!\cdot k$.
If $\sigma\in\oS_k$, then $F_\mu^\sigma(x,y)$ is the probability that
the point $(x,y)$ and $k-1$ points randomly independently chosen based on $\mu$
induce the permutation $\sigma$ with the distinguished element corresponding to the point $(x,y)$.
Observe that $F_\mu(x,y)=F_\mu^{1\overline{2}}(x,y)$,
$F_\mu^{1\overline{2}}(x,y)+F_\mu^{2\overline{1}}(x,y)=x$ and
$F_\mu^{1\overline{2}}(x,y)+F_\mu^{\overline{2}1}(x,y)=y$.
A reader familiar with the concept of flag algebras developed
by Razborov~\cite{bib-razborov07} might recognize the notion of $1$-labelled flags
in the just introduced notation.

Similarly to Theorem~\ref{thm-express-mu}, the following is true.
Since the proof is completely analogous to that of Theorem~\ref{thm-express},
we decided to state the theorem without giving its proof.

\begin{theorem}
\label{thm-express-mu-general}
Let $\Sigma$ be a multiset of rooted permutations.
There exist a finite set $S$ of permutations and coefficients $\gamma_\sigma$, $\sigma\in S$,
such that
\begin{equation}
\int_{[0,1]^2} \prod_{\sigma\in\Sigma}F_{\mu}^\sigma(x,y)\dif\mu=\sum_{\sigma\in S}\gamma_\sigma d(\sigma,\mu)
\end{equation}
for every permuton $\mu$.
\end{theorem}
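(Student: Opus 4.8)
The plan is to imitate the proof of Theorem~\ref{thm-express} almost verbatim, replacing the scalar function $F_\mu$ by the rooted functions $F_\mu^\sigma$ and exploiting their probabilistic meaning. Write $\Sigma=\{\sigma_1,\dots,\sigma_m\}$ as a multiset, put $k_j=|\sigma_j|$, and set $N=1+\sum_{j=1}^m(k_j-1)$. For a fixed permuton $\mu$ I would sample $N$ points of $[0,1]^2$ independently according to $\mu$, designate the first of them as the \emph{root} $(x,y)$, and partition the remaining $N-1$ indices into consecutive blocks $B_1,\dots,B_m$ with $|B_j|=k_j-1$. Since $\mu$ has uniform marginals, with probability one all the $x$-coordinates (and likewise all the $y$-coordinates) of the sampled points are distinct, so these $N$ points induce a permutation $\pi\in S_N$ in the usual way, and $\pi$ is precisely a $\mu$-random permutation of order $N$.

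The key step is the probabilistic reinterpretation of the integrand. Conditioned on the root $(x,y)$, the $k_j-1$ points indexed by $B_j$ form $k_j-1$ independent $\mu$-samples, and the blocks are mutually independent; hence, by the definition of $F_\mu^{\sigma_j}$, the product $\prod_{j=1}^m F_\mu^{\sigma_j}(x,y)$ is exactly the conditional probability of the event $\mathcal{E}$ that, for every $j\in[m]$, the root together with the points indexed by $B_j$ induces $\sigma_j$ with the root in the distinguished position. (A rooted permutation occurring several times in $\Sigma$ simply corresponds to several disjoint blocks of fresh samples, which is what the corresponding power of that factor records.) Taking $(x,y)$ itself to be an additional independent $\mu$-sample --- that is, integrating the product against $\dif\mu$ --- turns the left-hand side of the claimed identity into $\Pr[\mathcal{E}]$, the probability of a single event determined by the $N$ sampled points.

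It remains to discretize. The event $\mathcal{E}$ depends only on the relative order of the coordinates of the $N$ points, i.e., only on the induced permutation $\pi\in S_N$: call $\pi$ \emph{admissible} if, for each $j\in[m]$, the rooted subpermutation of $\pi$ on the positions $\{1\}\cup B_j$, with position $1$ distinguished, equals $\sigma_j$. Then $\Pr[\mathcal{E}]=\sum d(\pi,\mu)$, the sum being over all admissible $\pi\in S_N$, so the theorem follows with $S=S_N$ (or just the set of admissible permutations) and $\gamma_\pi$ equal to $1$ for admissible $\pi$ and $0$ otherwise. I do not expect a genuine obstacle here --- the argument is routine, which is presumably why the authors omit it --- but the one place that needs care is the bookkeeping: keeping straight which sample is the root and which block each remaining sample belongs to (this is dictated by the product structure of the integrand, one block per factor), together with the standard discarding of the measure-zero set on which two coordinates coincide, exactly as in Theorem~\ref{thm-express}.
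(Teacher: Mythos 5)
Your overall strategy is exactly the one the paper intends (the proof is omitted there as ``completely analogous'' to Theorem~\ref{thm-express}): interpret the integrand, conditioned on the root $(x,y)$, as the probability that $m$ independent blocks of fresh $\mu$-samples each realize the corresponding rooted pattern, integrate out the root as one further $\mu$-sample, and thus write the left-hand side as the probability $\Pr[\mathcal{E}]$ of a single event determined by $N$ i.i.d.\ samples. Everything up to and including that identification is correct, including your treatment of repeated elements of the multiset $\Sigma$.

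The final discretization step, however, is wrong as stated. The event $\mathcal{E}$ is a property of the \emph{labelled} configuration of the $N$ samples (it refers to which sample is the root and to which block each remaining sample belongs), whereas the induced permutation $\pi$ is the unlabelled pattern, which forgets the correspondence between sample labels and positions; in particular, sample $1$ need not occupy position $1$ of $\pi$, so ``the rooted subpermutation of $\pi$ on the positions $\{1\}\cup B_j$'' does not describe $\mathcal{E}$. Since $\mathcal{E}$ is not a union of events $\{\pi=\tau\}$, the coefficients cannot be taken to be $0$/$1$ indicators. Concretely, for $\Sigma=\{1\overline{2}\}$ we have $N=2$ and the left-hand side is $\int F_\mu\,\dif\mu$, which equals $1/4$ for $\mu=\lambda$, while your formula returns $d(12,\lambda)=1/2$: of the two equally likely labellings of a configuration with pattern $12$, only the one in which the root is the upper-right point realizes $\mathcal{E}$, so the correct coefficient is $\gamma_{12}=1/2$. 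The repair is exactly the device used at the end of the proof of Theorem~\ref{thm-express}: by exchangeability, conditioned on the pattern $\tau$ the assignment of sample labels to positions is uniformly distributed over all $N!$ bijections, so one must set $\gamma_\tau$ equal to the fraction of these assignments under which, for every $j$, the positions assigned to $\{1\}\cup B_j$ induce $\sigma_j$ in $\tau$ with the root's position distinguished. With that change the argument is complete and coincides with the paper's intended proof.
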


\subsection{Graphs and graphons}
\label{subs-graphon}

The other limit structure we consider is limits of graphs.
A {\em graph\/} is a pair $(V,E)$ where $E\subseteq {V\choose 2}$.
The elements of $V$ are called {\em vertices\/} and the elements of $E$ are called {\em edges\/}.
The {\em order\/} of a graph $G$ is the number of its vertices and it is denoted by $|G|$.
If $G$ and $G'$ are graphs, then $G\cup G'$ is the disjoint union of $G$ and $G'$ and
$G+G'$ is the graph obtained from $G\cup G'$ by adding all edges between $G$ and $G'$.
Finally, if $G$ is a graph and $U$ is a subset of its vertices,
then $G\setminus U$ is the graph obtained from $G$ by removing the vertices of $U$ and all edges containing at least one vertex from $U$.
The {\em density\/} $d(H,G)$ of $H$ in $G$ is the probability that $|H|$ randomly chosen
vertices of $G$ induce a subgraph isomorphic to $H$. If $|H|> |G|$, we set $d(H,G)=0$.

We now survey basic results related to the theory of dense graph limits as developed
in~\cite{bib-borgs08+,bib-borgs+,bib-borgs06+,bib-lovasz06+}.
A sequence of graphs $(G_i)_{i\in\NN}$ is {\em convergent\/} if the limit $d(H,G_i)$ exists for every $H$.
The associated limit object is called a {\em graphon\/}: it is a symmetric $\l$-measurable function from $[0,1]^2$ to $[0,1]$.
Here, symmetric stands for the property that $W(x,y)=W(y,x)$ for every $x,y\in [0,1]$.
If $W$ is a graphon, then a {\em $W$-random graph\/} of order $k$ is obtained by sampling $k$ random points $x_1,\ldots,x_k\in [0,1]$
uniformly and independently and joining the $i$-th and the $j$-th vertex by an edge with probability $W(x_i,x_j)$.
As in the case of permutations, we write $d(H,W)$ for the probability that a $W$-random graph of order $|H|$ is isomorphic to $H$.
For every convergent sequence $(G_i)_{i\in\NN}$ of graphs, there exists a graphon $W$ such that
$d(H,W)=\lim_{i\to\infty} d(H,G_i)$ for every graph $H$.
We call such a graphon $W$ a {\em limit} of $(G_i)_{i\in\NN}$.
On the other hand, if $W$ is a graphon, then with probability one the sequence $(G_i)_{i\in\NN}$ where $G_i$ is a $W$-random graph of order $i$
is convergent and its limit is $W$.

Unlike in the case of permutations, the limit of a convergent sequence of graphs is not unique.
For example, if $W$ is a limit of $(G_i)_{i\in\NN}$ and
$\varphi:[0,1]\to[0,1]$ is a measure preserving transformation,
then the graphon $W':=W(\varphi(x),\varphi(y))$ is also a limit of $(G_i)_{i\in\NN}$.
Let us introduce the following definition of equivalence of graphons:
two graphons $W$ and $W'$ are {\em weakly isomorphic\/} if $d(H,W)=d(H,W')$ for every graph $H$.

Finally, a graphon $W$ is {\em finitely forcible\/} if there exist graphs $H_1,\ldots,H_k$ such that
any graphon $W'$ satisfying $d(H_i,W)=d(H_i,W')$ for $i\in[k]$ is weakly isomorphic to $W$.

The densities of graphs in a graphon $W$ can be expressed as integrals using $W$.
If $W$ is a graphon and $H$ is a graph of order $k$ with vertices $v_1,\ldots,v_k$ and edge set $E$, then
\[
d(H,W)=\frac{k!}{|\Aut(H)|}\int\limits_{[0,1]^k}\prod_{v_iv_j\in E} W(x_i,x_j)\prod_{v_iv_j\not\in E} (1-W(x_i,x_j)) \dif x_1\dots\dif x_k\label{eq-densW}
\]
where $\Aut(H)$ is the automorphism group of $H$.

A permutation $\pi$ of order $k$ can be associated with a graph $G_\pi$ of order $k$ as follows.
The vertices of $G_\pi$ are the integers between $1$ and $k$ and $ij$ is an edge of $G$ if and only if
either $i<j$ and $\pi(i)>\pi(j)$, or $i>j$ and $\pi(i)<\pi(j)$.
If $(\pi_i)_{i\in\NN}$ is a convergent sequence of permutations,
then the sequence of graphs $(G_{\pi_i})_{i\in\NN}$ is also convergent.
Moreover, if two convergent sequences of permutations have the same limit,
then the graphons associated with the two corresponding (convergent) sequences of graphs are weakly isomorphic.
In this way, we may associate each permuton $\mu$ with a graphon $W_\mu$,
which is unique up to a weak isomorphism (see Figure~\ref{fig-graphon} for examples).

\begin{figure}
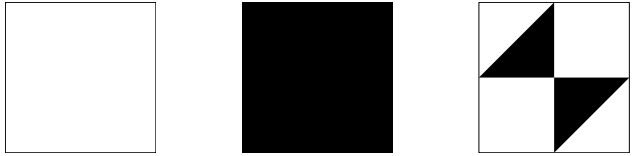

\begin{center}
\epsfbox{permforce.16} \hskip 10mm
\epsfbox{permforce.17} \hskip 10mm
\epsfbox{permforce.18}
\end{center}
\caption{The graphons associated with the first three permutons depicted in Figure~\ref{fig-permuton},
where the point (0,0) is in the lower left corner.}\label{fig-graphon}
\end{figure}

\section{Permutons with finite structure}
\label{sect-finite}

In this section, we give a sufficient condition on a permuton to be finitely forcible.
A function $f:[0,1]^2\to\RR$ is called piecewise polynomial if there exist finitely
many polynomials $p_1,\ldots,p_k$ such that
$f(x,y)\in\lt\{p_1(x,y),\ldots, p_k(x,y)\rt\}$ for every $(x,y)\in [0,1]^2$.

\begin{theorem}
\label{thm-force}
Every permuton $\mu$ such that $F_\mu$ is piecewise polynomial is finitely forcible.
\end{theorem}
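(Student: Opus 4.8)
The plan is to apply Theorem~\ref{thm-express} to one carefully chosen polynomial identity and to finitely many ``moments'', turning them into density constraints that pin down $F_{\mu'}$ completely (recall that $F_{\mu'}$ determines $\mu'$). Since $F_\mu$ is piecewise polynomial, I would fix pairwise distinct polynomials $p_1,\dots,p_m$ with $F_\mu(x,y)\in\{p_1(x,y),\dots,p_m(x,y)\}$ for every $(x,y)\in[0,1]^2$, and set $P(x,y,z)=\prod_{i=1}^m\bigl(z-p_i(x,y)\bigr)^2$, so that $P\ge 0$ on $[0,1]^3$ and $P(x,y,F_\mu(x,y))\equiv 0$. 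Expanding $P$ as $\sum_c p_c(x,y)\,z^c$ and applying Theorem~\ref{thm-express} to each term yields a finite set $S_0$ of permutations and coefficients with $\int_{[0,1]^2}P(x,y,F_{\mu'}(x,y))\dif\lambda=\sum_{\sigma\in S_0}\gamma_\sigma\,d(\sigma,\mu')$ for every permuton $\mu'$, and this integral is $0$ for $\mu'=\mu$. Hence any permuton $\mu'$ with $d(\sigma,\mu')=d(\sigma,\mu)$ for all $\sigma\in S_0$ has $\int_{[0,1]^2}P(x,y,F_{\mu'})\dif\lambda=0$; since the integrand is continuous (because $F_{\mu'}$ is) and non-negative, it vanishes identically, i.e.\ $F_{\mu'}(x,y)\in\{p_1(x,y),\dots,p_m(x,y)\}$ for \emph{every} $(x,y)$.

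Next I would upgrade this to: $F_{\mu'}$ lies in an explicit \emph{finite} family $\mathcal G$ of continuous functions that contains $F_\mu$. Put $g=\prod_{i<j}(p_i-p_j)$; as the $p_i$ are distinct, $Z(g)$ is a proper algebraic subset, so $U:=[0,1]^2\setminus Z(g)$ is open, dense, of full measure, and (being semialgebraic) has finitely many connected components $\Omega_1,\dots,\Omega_N$. On each $\Omega_r$ the polynomials $p_1,\dots,p_m$ are pairwise distinct at every point, so each set $\{(x,y)\in\Omega_r:F_{\mu'}(x,y)=p_i(x,y)\}$ is clopen in the connected set $\Omega_r$; hence $F_{\mu'}\equiv p_{i_r}$ on $\Omega_r$ for a single index $i_r\in[m]$. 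Thus $F_{\mu'}|_U$ is one of at most $m^N$ functions, and since $U$ is dense and $F_{\mu'}$ is continuous, $F_{\mu'}$ is the unique continuous extension of $F_{\mu'}|_U$; the same applies to $F_\mu$. Let $\mathcal G$ be the resulting finite set of continuous functions $[0,1]^2\to[0,1]$; then $F_\mu\in\mathcal G$ and $F_{\mu'}\in\mathcal G$.

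Finally I would separate the finitely many candidates. For each $G\in\mathcal G$ with $G\ne F_\mu$ there exist non-negative integers $a,b,c$ with $\int_{[0,1]^2}x^ay^bG^c\dif\lambda\ne\int_{[0,1]^2}x^ay^bF_\mu^c\dif\lambda$: otherwise $\int h(x,y,G)\dif\lambda=\int h(x,y,F_\mu)\dif\lambda$ for every continuous $h\colon[0,1]^3\to\RR$ by Weierstrass approximation, and the choice $h(x,y,z)=(z-F_\mu(x,y))^2$ forces $\int_{[0,1]^2}(G-F_\mu)^2\dif\lambda=0$, whence $G=F_\mu$, a contradiction. By Theorem~\ref{thm-express}, $\int_{[0,1]^2}x^ay^bF_{\mu'}^c\dif\lambda$ is a fixed linear combination of densities $d(\sigma,\mu')$ over a finite set $S_G$ of permutations. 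Taking $S=S_0\cup\bigcup_{G\in\mathcal G\setminus\{F_\mu\}}S_G$, any permuton $\mu'$ matching $\mu$ on all densities in $S$ satisfies $F_{\mu'}\in\mathcal G$ and $F_{\mu'}\ne G$ for each $G\in\mathcal G\setminus\{F_\mu\}$, hence $F_{\mu'}=F_\mu$ and $\mu'=\mu$; so $\mu$ is finitely forcible.

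The main obstacle is the first step: realising that forcing $\int_{[0,1]^2}P(x,y,F_{\mu'})\dif\lambda=0$ for the squared ``vanishing polynomial'' $P$ immediately confines $F_{\mu'}$ to a pointwise selection from $p_1,\dots,p_m$. After that, the reduction to finitely many candidate functions and their separation by polynomial moments is routine, the only nontrivial ingredient being that a finite arrangement of real algebraic curves subdivides the square into finitely many faces; one should also be careful that the stated notion of ``piecewise polynomial'' is purely pointwise, so the region structure must be recovered, as above, rather than assumed.
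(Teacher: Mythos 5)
Your proposal is correct and follows essentially the same route as the paper: the squared vanishing polynomial $\prod_i(F_{\mu'}-p_i)^2$ integrated to zero confines $F_{\mu'}$ to a pointwise selection, finiteness of the continuous selections comes from the finitely many components of the complement of $Z\bigl(\prod_{i<j}(p_i-p_j)\bigr)$, and the candidates are then separated by integral constraints expressible via Theorem~\ref{thm-express}. The only (harmless) difference is in the last step: the paper separates $F_\mu$ from all other candidates at once with a single Stone--Weierstrass polynomial $p$ and the one constraint $\int(F_{\mu'}-p)^2\,\dif\lambda=\varepsilon_0$, whereas you separate each candidate individually by a monomial moment.
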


\begin{proof}
Let $\mu$ be a permuton such that $F_\mu$ is piecewise polynomial,
that is, there exist polynomials $p_1,\ldots,p_k$ such that $F_\mu(x,y)\in\lt\{p_1(x,y),\ldots,p_k(x,y)\rt\}$ for every $(x,y)\in [0,1]^2$.
Let $\FF$ be the set of all continuous functions $f$ on $[0,1]^2$ such that $f(x,y)\in\lt\{p_1(x,y),\ldots,p_k(x,y)\rt\}$ for every $(x,y)\in [0,1]^2$.
The set $\FF$ is finite.
Indeed, let
\[q(x,y)=\prod_{1\le i<j\le k}\lt(p_j(x,y)-p_i(x,y)\rt)\]
and let $Q$ be the set of all points $(x,y)\in \RR^2$
such that $q(x,y)=0$.
By Harnack's curve theorem, the set $Q$ has finitely many connected components.
B\'ezout's theorem implies that
the number of branching points in each of these components is finite and these points have finite degrees.
Consequently, $\RR^2\setminus Q$ has finitely many components.
If $A_1,\ldots,A_\ell$ are all the connected components of $[0,1]^2\setminus Q$,
then each function $f\in\FF$ coincides with one of the $k$ polynomials $p_1,\ldots,p_k$ on every $A_i$.
So, $|\FF|\le k^\ell$.

Observe that the function $F_\mu(x,y)$ is continuous since the measure $\mu$ has uniform marginals.
By the Stone-Weierstrass theorem, there exist a polynomial $p(x,y)$ and $\varepsilon>0$ such that
\begin{eqnarray}
\int_{[0,1]^2}\left( F_\mu(x,y)-p(x,y) \right)^2\dif\lambda & < & \varepsilon \;\mbox{, and} \label{eq-force1} \\
\int_{[0,1]^2}\left( f(x,y)-p(x,y) \right)^2\dif\lambda & > & \varepsilon \mbox{ for every $f\in\FF$, $f\not=F_\mu$.} \label{eq-force2}
\end{eqnarray}
Let $\varepsilon_0$ be the value of the left hand side of (\ref{eq-force1}).
We claim that the unique permuton $\mu'$ satisfying
\begin{eqnarray}
\int_{[0,1]^2}\prod_{i=1}^k\left( F_{\mu'}(x,y)-p_i(x,y) \right)^2\dif\lambda & = & 0 \;\mbox{, and} \label{eq-force3} \\
\int_{[0,1]^2}\left( F_{\mu'}(x,y)-p(x,y) \right)^2\dif\lambda & = & \varepsilon_0 \;  \label{eq-force4}
\end{eqnarray}
is $\mu$.
Assume that $\mu'$ is a permuton satisfying both (\ref{eq-force3}) and (\ref{eq-force4}).
The equation (\ref{eq-force3}) implies that $F_{\mu'}\in\FF$.
Next,
(\ref{eq-force2}), (\ref{eq-force4}), and \eqref{eq-force1} yield that $F_{\mu'}\not=f$ for every $f\in\FF$, $f\not=F_\mu$.
We conclude that $F_{\mu'}=F_{\mu}$ and thus $\mu'=\mu$.

By Theorem~\ref{thm-express}, the left hand sides of (\ref{eq-force3}) and (\ref{eq-force4})
can be expressed as finite linear combinations of densities $d(\sigma,\mu)$.
Let $S$ be the set of all permutations appearing in these linear combinations.
Any permuton $\mu'$ with $d(\sigma,\mu')=d(\sigma,\mu)$ for every $\sigma\in S$
satisfies both (\ref{eq-force3}) and (\ref{eq-force4}) and thus it must be equal to $\mu$.
This shows that $\mu$ is finitely forcible.
\end{proof}

We immediately obtain the following corollary.

\begin{corollary}
\label{cor-force}
If $\mu$ is a permuton such that
there exist non-negative reals $\alpha_1,\ldots,\alpha_k$ and
non-trivial polygons $A_1,\ldots,A_k\subseteq [0,1]^2$
satisfying $\mu=\sum\limits_{i=1}^k\alpha_i\lambda_{A_i}$,
then $\mu$ is finitely forcible.
\end{corollary}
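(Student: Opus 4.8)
The plan is to deduce Corollary~\ref{cor-force} directly from Theorem~\ref{thm-force} by showing that whenever $\mu=\sum_{i=1}^k\alpha_i\lambda_{A_i}$ for non-negative reals $\alpha_i$ and non-trivial convex polygons $A_i$, the function $F_\mu$ is piecewise polynomial. Since $F_\mu(x,y)=\mu([0,x]\times[0,y])=\sum_{i=1}^k\alpha_i\lambda_{A_i}([0,x]\times[0,y])$ and a finite sum of piecewise polynomial functions is piecewise polynomial (the common refinement of finitely many polynomial partitions is again a polynomial partition, and on each cell the sum of polynomials is a polynomial), it suffices to treat a single $\lambda_A$ for a non-trivial convex polygon $A$.

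So the main step is: for a non-trivial convex polygon $A\subseteq[0,1]^2$, the function $G(x,y):=\lambda_A([0,x]\times[0,y])$ is piecewise polynomial. First I would dispose of the degenerate case where $A$ is a segment: then $\lambda_A$ is the normalized $1$-dimensional Lebesgue measure along the segment, the segment is cut by the vertical line $\{x\}\times[0,1]$ and the horizontal line $[0,1]\times\{y\}$ into at most a bounded number of arcs, and $G(x,y)$ is the length fraction of the sub-arc lying in $[0,x]\times[0,y]$; parametrizing the segment linearly, this fraction is a piecewise linear (hence piecewise polynomial) function of $(x,y)$, with the pieces delimited by finitely many lines (those through the endpoints of $A$, horizontal and vertical). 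For the full-dimensional case, $\lambda_A$ is $\frac{1}{\mathrm{area}(A)}$ times Lebesgue measure restricted to $A$, so $G(x,y)=\frac{1}{\mathrm{area}(A)}\,\mathrm{area}\bigl(A\cap([0,x]\times[0,y])\bigr)$. I would observe that $A\cap([0,x]\times[0,y])$ is a convex polygon whose boundary consists of portions of the finitely many edges of $A$ together with portions of the two lines $\{x\}\times\RR$ and $\RR\times\{y\}$, and that its area can be computed by the shoelace formula from the coordinates of its vertices. Each vertex is either a fixed vertex of $A$, or an intersection of a fixed edge of $A$ with one of the two moving lines (coordinates affine in $x$ or in $y$), or the corner $(x,y)$ itself; hence each vertex has coordinates that are rational — in fact polynomial, after clearing the bounded denominators coming from edge slopes — in $(x,y)$ on any region where the combinatorial type of the intersection polygon is constant. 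The shoelace formula then gives a polynomial in $(x,y)$ on each such region. Finally, the regions where the combinatorial type changes are cut out by finitely many algebraic conditions: a moving line passing through a vertex of $A$ (lines $x=$ const, $y=$ const), a vertex $(x,y)$ crossing an edge or an edge-line of $A$ (again a line or at worst a conic), and two intersection points coinciding; all of these are polynomial equations in $(x,y)$, so $[0,1]^2$ is partitioned into finitely many regions on each of which $G$ is a polynomial, and picking one representative polynomial per region (using continuity of $G$ on the boundaries) exhibits $G$ as piecewise polynomial in the sense of Theorem~\ref{thm-force}.

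Combining, $F_\mu=\sum_i\alpha_i F_{\lambda_{A_i}}$ is a finite sum of piecewise polynomial functions, hence piecewise polynomial, and Theorem~\ref{thm-force} applies to give finite forcibility of $\mu$.

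The step I expect to be the main obstacle is the bookkeeping in the full-dimensional case: verifying that there are only finitely many combinatorial types of the clipped polygon $A\cap([0,x]\times[0,y])$ as $(x,y)$ ranges over $[0,1]^2$, that the regions of constant type are semialgebraic with boundaries given by finitely many polynomial curves, and that on each region the shoelace area is genuinely a single polynomial (not merely piecewise so). One must also be slightly careful that Theorem~\ref{thm-force} requires $F_\mu$ itself — the actual cumulative distribution function — to take values among finitely many fixed polynomials \emph{globally}, which is guaranteed once we have the finite polynomial partition, since on the (closed) region boundaries $F_\mu$ is continuous and therefore agrees with the limiting polynomial from an adjacent cell. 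None of this is deep, but it is the part that needs care to state cleanly; the rest is immediate from Theorem~\ref{thm-force}.
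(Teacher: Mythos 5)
Your proof is correct and takes essentially the same route as the paper: reduce to Theorem~\ref{thm-force} by writing $F_\mu=\sum_{i=1}^k\alpha_i F_{\lambda_{A_i}}$ and checking that each summand is piecewise polynomial. The paper dismisses that check with a single ``clearly''; your segment-case analysis and shoelace/combinatorial-type argument for the full-dimensional case is a valid (if more elaborate than the paper deemed necessary) way to fill in that step, and your closing remark about boundary points being handled by continuity correctly addresses the paper's global definition of piecewise polynomial.
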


\begin{proof}
Let $F_i$, $i\in [k]$, be the function from $[0,1]^2$ to $[0,1]$ defined
as $F_i(x,y)=\lambda_{A_i}\lt([0,x]\times [0,y]\rt)$.
Clearly, each function $F_i$ is piecewise polynomial.
Since $F_{\mu}=\sum\limits_{i=1}^k\alpha_i F_i$,
the finite forcibility of $\mu$ follows from Theorem~\ref{thm-force}.
\end{proof}

\begin{figure}
\begin{center}
\epsfbox{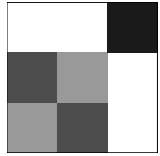}
\end{center}
\caption{The permuton $\mu_{M}$ constructed as an example at the end of Section~\ref{sect-finite}.
The gray area in the picture is the support of the measure and different shades correspond to the density of the measure.}\label{fig-force-step}
\end{figure}

A particular case of permutons that are finitely forcible by Corollary~\ref{cor-force} is the following.
If $k$ is an integer, $z_1,\ldots,z_k\in [0,1]$ are reals such that $z_1+\cdots+z_k=1$ and
$M$ is a square matrix of order $k$ with entries being non-negative reals summing to $z_i$
in the $i$-th row and in the $i$-th column, we can define a permuton $\mu_{M}$ to be the sum
\[\mu_{M}=\sum_{i,j=1}^k M_{ij}\mu_{A_{ij}}\;\mbox{,}\]
where $A_{ij}=[s_{i-1} ,s_i]\times [s_{j-1},s_j]$, $i,j\in [k]$ and
$s_i=z_1+\cdots+z_i$ (in particular, $s_0=0$ and $s_k=1$).
For instance, if $z_1=z_2=z_3=1/3$ and
\[M=\left(\begin{array}{ccc} 0 & 0 & 1/3 \\ 2/9 & 1/9 & 0 \\ 1/9 & 2/9 & 0 \end{array}\right)\;\mbox{,}\]
we get the permuton depicted in Figure~\ref{fig-force-step}.

\section{Permutons with infinite structure}
\label{sect-infinite}

In this section, we show that two particular types of permutons with infinite structure are finitely forcible.
We show that the associated graphons are not finitely forcible in the next section.

\subsection{Union of monotone permutations}
\label{subs-mono}

For $\alpha\in (0,1)$, define $\mu^m_\alpha$ to be the permuton
$\mu^m_\alpha=\sum_{i=1}^\infty (1-\a)\alpha^{i-1}\lambda_{I\lt(1-\alpha^{i-1},1-\alpha^i\rt)}\;\mbox{,}$
where $I(z,z')=\lt\{(x,z'+z-x), x\in [z,z']\rt\}$.
Examples of the just defined permutons can be found in Figure~\ref{fig-mono}.
We next show that all permutons $\mu^m_\alpha$ are finitely forcible.

\begin{figure}
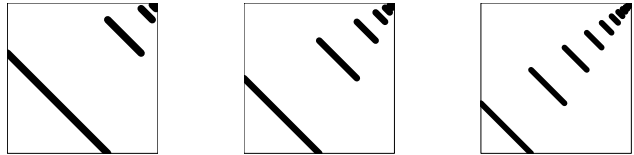

\begin{center}
\epsfbox{permforce.6} \hskip 10mm
\epsfbox{permforce.7} \hskip 10mm
\epsfbox{permforce.8}
\end{center}
\caption{The permutons $\mu^m_{1/3}$, $\mu^m_{1/2}$, and $\mu^m_{2/3}$.}\label{fig-mono}
\end{figure}

\begin{theorem}
\label{thm-mono}
For every $\alpha\in (0,1)$, the permuton $\mu^m_\alpha$ is finitely forcible.
\end{theorem}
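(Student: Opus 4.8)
The plan is to exhibit a finite set $S$ of permutations so that the prescribed densities $d(\sigma,\mu^m_\alpha)$, $\sigma\in S$, force a permuton $\mu'$ to equal $\mu^m_\alpha$, using two groups of constraints. For the first group, note that among $3$-point patterns only $123,132,213,321$ occur with positive density in $\mu^m_\alpha$, so $d(231,\mu^m_\alpha)=d(312,\mu^m_\alpha)=0$. Since the permutations avoiding $231$ and $312$ are exactly the direct sums of decreasing permutations, the constraints $d(231,\mu')=d(312,\mu')=0$ force $\mu'$ to be \emph{layered}: there is a closed set $C\subseteq[0,1]$ with $0,1\in C$ such that on each component $(s,t)$ of $[0,1]\setminus C$ the restriction of $\mu'$ to $[s,t]^2$ is a decreasing permuton of mass $t-s$ (i.e.\ its support is an antichain under the coordinatewise order), while on $\{(x,x):x\in C\}$ the measure $\mu'$ is carried by the diagonal. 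Since $\mu'$ has no atoms on axis-parallel lines, a short argument then shows that every point $(x,y)$ of the support of such a $\mu'$ that lies in a layer $(s,t)$ satisfies $F_{\mu'}(x,y)=s$, and that the diagonal part consists of points with $F_{\mu'}(x,x)=x$.

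For the second group I would encode the self-similarity of $\mu^m_\alpha$ by a single integral. For $\mu^m_\alpha$, a support point $(x,y)$ in the $i$-th layer satisfies $F_{\mu^m_\alpha}(x,y)=1-\alpha^{i-1}$ and $x+y=(1-\alpha^{i-1})+(1-\alpha^{i})$; eliminating $i$ gives $F_{\mu^m_\alpha}(x,y)=r(x,y)$, where $r(x,y):=\frac{x+y-1+\alpha}{1+\alpha}$. Hence $\int_{[0,1]^2}\bigl(F_{\mu^m_\alpha}-r\bigr)^2\,\dif\mu^m_\alpha=0$. Expanding the square as $F_{\mu'}^2-2rF_{\mu'}+r^2$ and applying Theorem~\ref{thm-express-mu} to the three summands (with exponent $k$ equal to $2$, $1$ and $0$ respectively), the quantity $\int_{[0,1]^2}(F_{\mu'}-r)^2\,\dif\mu'$ equals a fixed finite linear combination $\sum_\sigma\gamma_\sigma d(\sigma,\mu')$. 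I take $S$ to be $\{231,312\}$ together with all permutations $\sigma$ occurring in this combination, and prescribe $d(\sigma,\mu')=d(\sigma,\mu^m_\alpha)$ for $\sigma\in S$; then the combination takes its $\mu^m_\alpha$-value, namely $0$, so $\int_{[0,1]^2}(F_{\mu'}-r)^2\,\dif\mu'=0$. As $F_{\mu'}$ is continuous, this yields $F_{\mu'}(x,y)=r(x,y)$ for every $(x,y)$ in the support of $\mu'$.

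It remains to combine the two groups. If $\mu'$ is layered and $F_{\mu'}\equiv r$ on its support, then on a layer $(s,t)$ the equality $s=F_{\mu'}(x,y)=r(x,y)$ forces $x+y$ to be constant on the support there; since $\mu'$ has uniform marginals on $[s,t]$, the support of the layer must be the full anti-diagonal segment, so the layer carries the straight anti-diagonal measure $\lambda_{I(s,t)}$ and its right endpoint obeys $t=1-\alpha(1-s)$. A diagonal-part point $(x,x)$ would give $x=r(x,x)$, i.e.\ $x=1$, so there is no diagonal part in $[0,1)$ and no gap between consecutive layers; starting from the leftmost layer $[0,1-\alpha]$ one obtains inductively the layers $[1-\alpha^{i-1},1-\alpha^i]$, $i\ge1$, which exhaust $[0,1)$, and uniform marginals fix the mass of the $i$-th layer as $(1-\alpha)\alpha^{i-1}$. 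Hence $\mu'=\mu^m_\alpha$, which proves finite forcibility. The main obstacle is the first step: verifying carefully that $d(231,\mu')=d(312,\mu')=0$ really pins down the rigid layered structure at the permuton level (a closed cut set $C$, a decreasing permuton on each layer, the diagonal elsewhere) and handling the measure-zero boundary cases in the claim $F_{\mu'}\equiv s$ on a layer; the expansion via Theorem~\ref{thm-express-mu} and the arithmetic identifying the layer endpoints are routine.
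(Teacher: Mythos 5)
Your proposal is correct and follows essentially the same route as the paper: the constraint $d(231,\mu')=d(312,\mu')=0$ forces the layered antichain structure via the equivalence-relation argument, and your identity $F_{\mu'}(x,y)=\frac{x+y-1+\alpha}{1+\alpha}$ on the support is algebraically identical to the paper's integral constraint (\ref{eq-mono2}), yielding the same layer recursion $t=s+(1-\alpha)(1-s)$. The only difference is that the paper additionally prescribes $d(21,\mu)$ to pin down the layers as $[1-\alpha^{i-1},1-\alpha^i]$, whereas you deduce this directly from the recursion together with the absence of diagonal mass and of gaps --- a streamlining the paper itself remarks is possible.
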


\begin{proof}
We claim that any permuton $\mu$ satisfying
\begin{eqnarray}
& d(231,\mu)+d(312,\mu) = 0 \;\mbox{,} & \label{eq-mono1} \\
& d(21,\mu) = (1-\alpha)^2\sum\limits_{i=0}^\infty \alpha^{2i} \;\mbox{, and} & \label{eq-mono1b} \\
& \int\limits_{[0,1]^2}\left(1-x-y+F_\mu(x,y)-\frac{\alpha}{1-\alpha}\lt(x+y-2F_\mu(x,y)\rt)\right)^2\dif\mu = 0 & \label{eq-mono2}
\end{eqnarray}
is equal to $\mu^m_{\alpha}$.
This would prove the finite forcibility of $\mu^m_{\alpha}$ by Theorem~\ref{thm-express-mu}.
Note that the permuton $\mu^m_\alpha$ satisfies (\ref{eq-mono1}), (\ref{eq-mono1b}), and (\ref{eq-mono2}).

Assume that a permuton $\mu$ satisfies (\ref{eq-mono1}), (\ref{eq-mono1b}), and (\ref{eq-mono2}).
Let $X$ be the support of $\mu$ and consider the binary relation $R$ defined on the support of $\mu$ such that
$(x,y)R(x',y')$ if
\begin{itemize}
\item $x=x'$ and $y=y'$, or
\item $x<x'$ and $y>y'$, or
\item $x>x'$ and $y<y'$.
\end{itemize}
The relation $R$ is an equivalence relation. Indeed, the reflexivity and symmetry is clear.
To prove transitivity, consider three points $(x,y)$, $(x',y')$ and $(x'',y'')$ such that
$(x,y)R(x',y')$ and $(x',y')R(x'',y'')$ but it does not hold that $(x,y)R(x'',y'')$.
By the definition of $R$, either $x<x'$ and $x''<x'$, or $x>x'$ and $x''>x'$.
If $x<x'$ and $x''<x'$, then we obtain that $d(231,\mu)>0$ unless $x=x''$ (recall that $R$ is defined on the support of $\mu$).
We can now assume that $x=x''$ and $y<y''$.
Since $\mu$ has uniform marginals, the support of $\mu$ intersects at least one of the open rectangles
$(0,x)\times(y,y'')$, $(x,x')\times(y,y'')$ and $(x',1)\times(y,y'')$. However,
this yields that $d(231,\mu)>0$ in the first two cases and $d(312,\mu)>0$ in the last case.
The case $x>x'$ and $x''>x'$ is handled in an analogous way.

Let $\RRR$ be the set of equivalence classes of $R$.
If $A\in\RRR$, let $A_x$ and $A_y$ be the projections of $A$ on the $x$ and $y$ axes.
It is not hard to show that $A_x$ is a closed interval for each $A\in\RRR$ and
these intervals are internally disjoint for different choices of $A\in\RRR$.
The same holds for the projections on the $y$ axis.
Since $\mu$ has uniform marginals, the intervals $A_x$ and $A_y$ must have the same length for every $A\in\RRR$.
Moreover, the definition of $R$ implies that if $A_x$ precedes $A'_x$, then $A_y$ also precedes $A'_y$ for any $A,A'\in\RRR$.
We conclude that there exists a set $\II$ of internally disjoint closed intervals such that
\[\bigcup_{\lt[z,z'\rt]\in\II} \lt[z,z'\rt]=[0,1]\mbox{ and}\]
the support of $\mu$ is equal to (because the density of subpermutations $231$ and $312$ is zero and the measure $\mu$ has uniform marginals)
\[\bigcup_{\lt[z,z'\rt]\in\II} \lt\{(x,z'-x+z),x\in \lt[z,z'\rt]\rt\}\;\mbox{.}\]
Note that some intervals contained in $\II$ may be formed by single points.
Let $\II_0$ be the subset of $\II$ containing the intervals of positive length.

Let $[z,z']\in\II_0$ and let $I=\lt\{(x,z'-x+z),x\in [z,z']\rt\}$.
Since $\mu([0,x]\times [0,y])=\mu([0,z]\times [0,z])$ and the measure $\mu$ has uniform marginals,
it follows that $F_\mu(x,y)=z$.
The equality (\ref{eq-mono2}) implies that the (continuous) function integrated in (\ref{eq-mono2}) is zero for every $(x,y)\in I$.
Substituting $x+y=z+z'$ and $F_\mu(x,y)=z$ into this function implies
\begin{equation}
z'=z+(1-\alpha)(1-z)\;\mbox{.}\label{eq-zz}
\end{equation}

Let $Z$ be the set formed by the left end points of intervals in $\II_0$.
Define $z_1$ to be the minimum elements of $Z$,
and in general $z_i$ to be the minimum element of $Z\setminus\bigcup\limits_{j<i}\{z_j\}$.
The existence of these elements follows from (\ref{eq-zz}) and the fact that the intervals in $\II_0$ are internally disjoint.
If $Z$ is finite, we set $z_k=1$ for $k>|Z|$.
We derive from the definition of $Z$ and from (\ref{eq-zz}) that
\begin{eqnarray*}
\II_0 & = & \lt\{[z_i,z_i+(1-\alpha)(1-z_i)], ~ i\in \mathbb{N}^+\rt\}\setminus\lt\{[1,1]\rt\}\;\mbox{.}
\end{eqnarray*}
Consequently, we obtain
\begin{equation}
d(21,\mu)=\sum_{i=1}^\infty (1-\alpha)^2 (1-z_i)^2=(1-\alpha)^2\sum_{i=1}^\infty (1-z_i)^2\;\mbox{.}\label{eq-d21}
\end{equation}
For $j\in \mathbb{N}$, we define $\beta_j\in [0,1]$ as follows:
\[\beta_j=\left\{
          \begin{array}{ll}
             1-z_1 & \mbox{for } j=0,\\
	  \frac{1-z_j}{\alpha(1-z_{j-1})} & \mbox{if $z_{j}\neq 1$ and $j>0$, and} \\
	  0 & \mbox{otherwise.}
	  \end{array}
          \right.\]
The equation (\ref{eq-d21}) can now be rewritten as
\begin{equation}
d(21,\mu)=(1-\alpha)^2\sum_{i=1}^\infty\alpha^{2(i-1)}\prod_{j=1}^i\beta_j^2\;\mbox{.}
\end{equation}
Hence, the equality (\ref{eq-mono1b}) can hold only if $\beta_j=1$ for every $j$
which implies that $z_i=1-\alpha^{i-1}$. Consequently, the permutons $\mu$ and $\mu^m_\alpha$
are identical.
\end{proof}

We remark that any permuton $\mu$ obeying the constraints (\ref{eq-mono1}) and (\ref{eq-mono2})
must also be equal to $\mu^m_\alpha$.
However, we decided to include the additional constraint (\ref{eq-mono1b})
to make the presented arguments more straightforward.

\subsection{Union of random permutations}
\label{subs-rand}

We now show finite forcibility of another class of permutons. Its structure is similar to that of $\mu^m_\alpha$.
The proof proceeds along similar lines as the proof of Theorem~\ref{thm-mono}
but we have to overcome several new technical difficulties.
For $\alpha\in (0,1)$, define $\mu^r_\alpha$ to be the permuton
$\mu^r_\alpha=\sum_{i=1}^\infty (1-\a)\alpha^{i-1}\lambda_{[1-\alpha^{i-1},1-\alpha^i]\times[1-\alpha^{i-1},1-\alpha^i]}\;\mbox{.}$
See Figure~\ref{fig-rand} for examples.
Our goal is to show that all permutons $\mu^r_\alpha$ are finitely forcible.

\begin{figure}
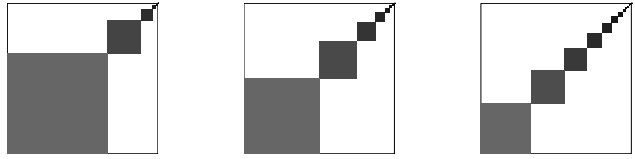

\begin{center}
\epsfbox{permforce.9} \hskip 10mm
\epsfbox{permforce.10} \hskip 10mm
\epsfbox{permforce.11}
\end{center}
\caption{The permutons $\mu^r_{1/3}$, $\mu^r_{1/2}$, and $\mu^r_{2/3}$.}\label{fig-rand}
\end{figure}

We start by proving an auxiliary lemma.

\begin{lemma}
\label{lm-multiint2}
There exist a finite set $S$ of permutations and reals $\gamma_\sigma$, $\sigma\in S$, such that
the following is equivalent for every permuton $\mu$:
\begin{itemize}
\item $\sum\limits_{\sigma\in S}\gamma_\sigma d(\sigma,\mu)=0$,
\item $\mu$ restricted to $[x_1,x_2]\times[y_2,y_1]$ is a (possibly zero) multiple of $\lambda_{[x_1,x_2]\times[y_2,y_1]}$
      for any two points $(x_1,y_1)$ and $(x_2,y_2)$ of the support of $\mu$ with $x_1<x_2$ and $y_1>y_2$.
\end{itemize}
\end{lemma}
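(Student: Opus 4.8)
The plan is to write down a manifestly non-negative functional $J(\mu)$, express it as a finite linear combination of densities $d(\sigma,\mu)$ using (a mild extension of) Theorems~\ref{thm-express}--\ref{thm-express-mu-general}, and then check that $J(\mu)=0$ is equivalent to the second bullet; the set $S$ and the coefficients $\gamma_\sigma$ are then read off from that linear combination. To set this up, fix points $p_1=(x_1,y_1)$ and $p_2=(x_2,y_2)$ with $x_1<x_2$ and $y_1>y_2$, let $R=R(p_1,p_2)=[x_1,x_2]\times[y_2,y_1]$, and for $(x,y)\in R$ put
\[
\widetilde H(p_1,p_2;x,y):=(x_2-x_1)(y_1-y_2)\,\mu\left([x_1,x]\times[y_2,y]\right)-(x-x_1)(y-y_2)\,\mu(R).
\]
By inclusion--exclusion, both $\mu([x_1,x]\times[y_2,y])$ and $\mu(R)$ are alternating sums of four values of $F_\mu$ at corners whose coordinates come from $p_1$, $p_2$ and $(x,y)$, so $\widetilde H$ is a polynomial combination of such values and in particular is continuous in all six variables. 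Since $(x,y)\mapsto\mu([x_1,x]\times[y_2,y])$ is the distribution function of $\mu|_R$ based at the corner $(x_1,y_2)$, and hence determines $\mu|_R$ (just as $F_\mu$ determines $\mu$), a direct computation shows that $\mu|_R$ is a (possibly zero) multiple of $\lambda_R$ if and only if $\widetilde H(p_1,p_2;x,y)=0$ for every $(x,y)\in R$.

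Next, set $\delta(p_1,p_2):=\int_{R(p_1,p_2)}\widetilde H(p_1,p_2;x,y)^2\,\dif\lambda(x,y)\ge 0$. A dominated-convergence argument shows that $\delta$ is continuous on the open region $\{x_1<x_2,\ y_1>y_2\}$, and since $\widetilde H^2$ is continuous and non-negative on the non-degenerate rectangle $R$, $\delta(p_1,p_2)=0$ holds precisely when $\mu|_{R(p_1,p_2)}$ is a multiple of $\lambda_{R(p_1,p_2)}$. Now define
\[
J(\mu):=\int_{[0,1]^2}\!\int_{[0,1]^2}\mathbf{1}[x_1<x_2,\, y_1>y_2]\,\delta(p_1,p_2)\,\dif\mu(p_1)\,\dif\mu(p_2)\ \ge\ 0 .
\]
If the second bullet holds, then $\delta(p_1,p_2)=0$ for every pair of points of the support with $x_1<x_2$ and $y_1>y_2$; since $\mu\times\mu$ is carried by the product of the support with itself, the integrand vanishes $\mu\times\mu$-almost everywhere and $J(\mu)=0$. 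Conversely, if $J(\mu)=0$ then the integrand vanishes $\mu\times\mu$-almost everywhere, so $\delta=0$ on a set of full $\mu\times\mu$-measure inside the open orientation region. Given any two points $p_1,p_2$ of the support with $x_1<x_2$ and $y_1>y_2$, every small enough product neighbourhood of $(p_1,p_2)$ lies in the orientation region and has positive $\mu\times\mu$-measure, hence meets $\{\delta=0\}$; by continuity of $\delta$ there, $\delta(p_1,p_2)=0$, i.e.\ $\mu|_{R(p_1,p_2)}$ is a multiple of $\lambda_{R(p_1,p_2)}$, which is the second bullet. (Pairs with $x_1=x_2$ or $y_1=y_2$ may be ignored: then $\mu(R)=0$ by the uniform-marginal property, so $\mu|_R=0$, a zero multiple of $\lambda_R$.)

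It remains to express $J(\mu)$ in the form $\sum_{\sigma\in S}\gamma_\sigma d(\sigma,\mu)$. Expanding $\widetilde H^2$ and replacing each rectangle mass by its four-term $F_\mu$-expression, the integrand of $J(\mu)$ becomes a finite sum of terms of the shape
\[
\bigl(\text{polynomial in }x_1,x_2,y_1,y_2,x,y\bigr)\cdot F_\mu(a_1,b_1)\,F_\mu(a_2,b_2)\cdot\bigl(\text{product of indicators of relative orders of coordinates}\bigr),
\]
where each $a_i$ and each $b_i$ is a coordinate of one of $p_1$, $p_2$, $(x,y)$. Exactly as in the proof of Theorem~\ref{thm-express}, the $\lambda$-distributed point $(x,y)$ is simulated by the first coordinate of one $\mu$-random point together with the second coordinate of another; each monomial of the polynomial factor is realised by further independent $\mu$-random points via the uniform marginals; and each factor $F_\mu(a_i,b_i)$ is realised by one more independent $\mu$-random point required to lie to the south-west of the corner $(a_i,b_i)$. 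After these substitutions every term of $J(\mu)$ is the probability that a bounded number of $\mu$-random points induce one of finitely many permutations, i.e.\ a finite linear combination of densities $d(\sigma,\mu)$; summing the terms gives the required $S$ and $\gamma_\sigma$.

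I expect the main obstacle to be the bookkeeping in this last step: one must verify that each event arising there — in particular the ``south-west of the mixed corner $(a_i,b_i)$'' events, whose two coordinates come from different sample points, together with all the relative-order indicators — is genuinely an event about the relative order of finitely many $\mu$-random points, so that the (routine) extension of Theorems~\ref{thm-express}--\ref{thm-express-mu-general} to several anchor points, several roots and mixed corners applies; this is presumably one of the ``new technical difficulties'' alluded to before the lemma. The purely analytic part — the reformulation through $\widetilde H$ and the passage from almost-every-pair to every-pair of support points — is straightforward given that $F_\mu$ is continuous.
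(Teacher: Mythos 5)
Your proposal is correct and is essentially the paper's own argument: the paper likewise integrates a manifestly non-negative quadratic functional, detecting proportionality of $\mu$ restricted to $[x_1,x_2]\times[y_2,y_1]$ with the uniform measure, over pairs of $\mu$-random anchor points, and then converts the result into a finite linear combination of subpermutation densities by the sampling technique of Theorems~\ref{thm-express}--\ref{thm-express-mu-general}. The only (cosmetic) difference is the choice of quadratic form: you square the pointwise residual against the explicitly normalized multiple $\mu(R)\lambda_{R}$ of the uniform measure on the rectangle $R=[x_1,x_2]\times[y_2,y_1]$, while the paper uses the Cauchy--Schwarz defect of $\mu([x_1,x]\times[y_2,y])$ against $(x-x_1)(y-y_2)$, which encodes the same vanishing condition.
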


\begin{proof}
The proof technique is similar to that used in~\cite{bib-kral12+}.
Let $\tilde{\lambda}_A(X)=\lambda(X\cap A)$, i.e., $\tilde{\lambda}_A(X)=\lambda(A) \cdot \lambda_A(X)$.
Let $(x_1,y_1)$ and $(x_2,y_2)$ be two points of the support of $\mu$ with $x_1<x_2$ and $y_1>y_2$.
By Cauchy-Schwartz inequality,
the measure $\mu$ restricted $[x_1,x_2]\times[y_2,y_1]$ is a multiple of $\tilde{\lambda}_{[x_1,x_2]\times[y_2,y_1]}$
if and only if it holds that
\begin{align}
&\left(\int\limits_{(x,y)}\mu([x_1,x]\times [y_2,y])^2\;\dif\tilde{\lambda}_{[x_1,x_2]\times[y_2,y_1]}\right)\times  \nonumber \\
&\left(\int\limits_{(x,y)}(x-x_1)^2(y-y_2)^2\;\dif\tilde{\lambda}_{[x_1,x_2]\times[y_2,y_1]}\right)-  \nonumber \\
&\left(\int\limits_{(x,y)}(x-x_1)(y-y_2)\mu([x_1,x]\times [y_2,y])\;\dif\tilde{\lambda}_{[x_1,x_2]\times[y_2,y_1]}\right)^2  =  0 \label{eq-multi-rand1}
\end{align}
Since the left hand side of (\ref{eq-multi-rand1})
cannot be negative, we obtain that the second statement in the lemma is equivalent to
\begin{align}
&\int\limits_{(x_1,y_1)}\int\limits_{(x_2,y_2)}\int\limits_{(x,y)}\int\limits_{(x',y')}
(x'-x_1)^2(y'-y_2)^2\cdot\mu\lt([x_1,x]\times [y_2,y]\rt)^2- & & \nonumber \\
&(x-x_1)(y-y_2)\cdot\mu\lt([x_1,x]\times [y_2,y]\rt)\cdot(x'-x_1)(y'-y_2)\cdot\mu([x_1,y_2]\times [x',y']) & & \nonumber \\
&\qquad\qquad\qquad\qquad\qquad\qquad
\;\dif\tilde{\lambda}_{[x_1,x_2]\times[y_2,y_1]}\;\dif\tilde{\lambda}_{[x_1,x_2]\times[y_2,y_1]}\;\dif\mu\;\dif\mu ~=~ 0  \label{eq-multi-rand2}
\end{align}
In the rest of the proof,
we show that the left hand side of (\ref{eq-multi-rand2}) can be expressed as a linear combination of finitely many subpermutation  densities.
Since this argument follows the lines of the proofs of Theorems~\ref{thm-express}--\ref{thm-express-mu-general},
we only briefly explain the main steps.

The left hand side of (\ref{eq-multi-rand2}) is equal to the expected value of the integrated function in (\ref{eq-multi-rand2})
for two points $(x_1,y_1)$ and $(x_2,y_2)$ randomly chosen in $[0,1]^2$ based on $\mu$ and
two points $(x,y)$ and $(x',y')$ randomly chosen in $[0,1]^2$ based on $\lambda$
when treating the value of the integrated function to be zero
if $x_1\ge x_2$, $y_1\ge y_2$, $x\not\in [x_1,x_2]$, $x'\not\in [x_1,x_2]$, $y\not\in [y_1,y_2]$, or $y'\not\in [y_1,y_2]$.
Such points $(x_1,y_1)$, $(x_2,y_2)$, $(x,y)$, and $(x',y')$ can be obtained by sampling six random points from $[0,1]^2$ based on $\mu$
since $\mu$ has uniform marginals (see the proof of Theorem~\ref{thm-express} for more details).
When the four points $(x_1,y_1)$, $(x_2,y_2)$, $(x,y)$, and $(x',y')$ are sampled,
any of the quantities $x_1$, $y_2$, $x$, $y$, $x'$, $y'$, $\mu\lt([x_1,y_2]\times [x,y]\rt)$, and $\mu([x_1,y_2]\times [x',y'])$
appearing in the product is equal to the probability that a point randomly chosen in $[0,1]^2$ based on $\mu$
has a certain property in a permutation determined by the sampled points.
Since we need to sample six additional points to be able to determine each of the products appearing in (\ref{eq-multi-rand1}),
the left hand side of (\ref{eq-multi-rand1}) is equal to a linear combination of densities of $12$-element permutations
with appropriate coefficients. We conclude that the lemma holds with $S=S_{12}$.
\end{proof}

Analogously, one can prove the following lemma. Since the proof follows the lines
of the proof of Lemma~\ref{lm-multiint2}, we omit further details.

\begin{lemma}
\label{lm-multiint3}
There exist a finite set $S$ of permutations and reals $\gamma_\sigma$, $\sigma\in S$ such that
the following is equivalent for every permuton $\mu$:
\begin{itemize}
\item $\sum_{\sigma\in S}\gamma_\sigma d(\sigma,\mu)=0$,
\item if $(x_1,y_1)$, $(x_2,y_2)$, and $(x_3,y_3)$ are three points of the support of $\mu$ with $x_1<x_2<x_3$ and $y_2<y_3<y_1$,
      then $\mu$ restricted $[x_2,x_3]\times[y_2,y_3]$ is a (possibly zero) multiple of $\lambda_{[x_2,x_3]\times[y_2,y_3]}$.
\end{itemize}
\end{lemma}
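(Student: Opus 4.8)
The plan is to mirror the proof of Lemma~\ref{lm-multiint2} almost verbatim, the only structural change being that the rectangle in question is now picked out by three points of the support instead of two. A valid triple $(x_1,y_1),(x_2,y_2),(x_3,y_3)$ of points of the support with $x_1<x_2<x_3$ and $y_2<y_3<y_1$ induces the pattern $312$: the point $(x_2,y_2)$ is the lower-left corner and $(x_3,y_3)$ the upper-right corner of the rectangle $R:=[x_2,x_3]\times[y_2,y_3]$, while $(x_1,y_1)$ lies strictly above and to the left of $R$ and only serves to decide which rectangles we examine. Writing $F(x,y):=\mu([x_2,x]\times[y_2,y])$ and $G(x,y):=(x-x_2)(y-y_2)$ and viewing them as elements of $L^2(R,\tilde{\lambda}_R)$, where $\tilde{\lambda}_R$ is the restriction of $\lambda$ to $R$, the quantity
\[
\langle F,F\rangle\,\langle G,G\rangle-\langle F,G\rangle^2
\]
is always nonnegative, and---since $G\not\equiv 0$ on $R$---it vanishes exactly when $F$ is a scalar multiple of $G$ on $R$, i.e.\ exactly when $\mu$ restricted to $R$ is a (possibly zero) multiple of $\lambda_R$. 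This is the Cauchy--Schwarz step of Lemma~\ref{lm-multiint2}, with the role of the lower-left corner now played by $(x_2,y_2)$.

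Next I would integrate this nonnegative Gram determinant over all triples $(x_1,y_1),(x_2,y_2),(x_3,y_3)$ chosen independently according to $\mu$, together with two auxiliary points $(x,y),(x',y')$ chosen according to $\lambda$, with the convention that the integrand is $0$ unless $x_1<x_2<x_3$, $y_2<y_3<y_1$, $(x,y)\in R$ and $(x',y')\in R$. Denote the resulting number by $N(\mu)\ge 0$. As in Lemma~\ref{lm-multiint2}, $N(\mu)=0$ forces the Gram determinant to vanish for $\mu^{\otimes 3}$-almost every valid triple of support points, and this upgrades to \emph{every} valid triple of support points: the determinant depends continuously on the corners $(x_2,y_2),(x_3,y_3)$ by continuity of $F_\mu$, the ordering inequalities in the hypothesis are strict and hence stable under small perturbations of the three points, and every neighbourhood of a support point carries positive $\mu$-mass, so a single bad valid triple would produce a family of bad valid triples of positive $\mu^{\otimes 3}$-measure, a contradiction. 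Hence $N(\mu)=0$ is equivalent to the second bullet of the lemma.

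Finally I would rewrite $N(\mu)$ as a finite linear combination $\sum_{\sigma\in S}\gamma_\sigma d(\sigma,\mu)$ exactly as in the proofs of Theorems~\ref{thm-express}--\ref{thm-express-mu-general} and Lemma~\ref{lm-multiint2}: the two $\lambda$-sampled points are replaced by pairs of $\mu$-sampled points using the uniform marginals; each remaining factor---either a coordinate difference such as $x'-x_2$ or a value $\mu([x_2,x]\times[y_2,y])$---is the probability that one further $\mu$-sampled point lands in a prescribed cell of the arrangement determined by the points already sampled; one pads every monomial so that all of them use the same number of sampled points; and $\gamma_\sigma$ is then the probability that a uniformly random permutation of that order realises the corresponding pattern constraints. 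Counting the sampled points as in Lemma~\ref{lm-multiint2} (three frame points rather than two, then the same auxiliary and bookkeeping points) one may take $S=S_{13}$. I do not expect a genuine obstacle here, since the argument is bookkeeping on top of Lemma~\ref{lm-multiint2}; the only two places that need real care are the passage from ``the integral vanishes'' to ``the conclusion holds for every triple of support points'' (which uses continuity of $F_\mu$ together with the strictness of the hypothesised inequalities, as above) and keeping the combinatorics of the point arrangement straight when extracting the coefficients $\gamma_\sigma$.
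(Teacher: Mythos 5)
Your proposal is correct and is exactly the argument the paper intends: the paper omits the proof of this lemma, stating only that it follows the lines of Lemma~\ref{lm-multiint2}, and your writeup carries out precisely that analogous Cauchy--Schwarz/sampling argument with the third frame point $(x_1,y_1)$ serving only as a selector, with the point count $S_{13}$ consistent with the paper's $S_{12}$ bookkeeping for the two-point version. Your explicit treatment of the passage from ``$\mu^{\otimes 3}$-almost every valid triple'' to ``every valid triple'' via continuity of $F_\mu$ and positivity of $\mu$ on neighbourhoods of support points is a detail the paper glosses over even in Lemma~\ref{lm-multiint2}, and it is handled correctly.
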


We are now ready to show that each permuton $\mu^r_\alpha$, $\alpha\in (0,1)$, is finitely forcible.

\begin{theorem}
\label{thm-rand}
For every $\alpha\in (0,1)$, the permuton $\mu^r_\alpha$ is finitely forcible.
\end{theorem}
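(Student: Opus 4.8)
The plan is to mirror the proof of Theorem~\ref{thm-mono}, using Lemmas~\ref{lm-multiint2} and~\ref{lm-multiint3} to force the ``block-diagonal quasirandom'' structure in place of the monotone structure. Concretely, I would claim that any permuton $\mu$ is equal to $\mu^r_\alpha$ provided it satisfies the following finitely forcible conditions: (i) $d(231,\mu)+d(312,\mu)=0$, as in Theorem~\ref{thm-mono}, which forces the support to decompose into a chain of ``boxes'' $B_i=[a_{i-1},a_i]\times[a_{i-1},a_i]$ stacked along the diagonal (the increasing-pattern condition rules out any mass in the ``off-diagonal'' position, so the support lives in a union of axis-parallel rectangles whose $x$- and $y$-projections coincide and are linearly ordered together); (ii) the two constraints from Lemmas~\ref{lm-multiint2} and~\ref{lm-multiint3}, which together force $\mu$ restricted to each box $B_i$ to be a uniform multiple of $\lambda_{B_i}$ --- here Lemma~\ref{lm-multiint3} handles the ``interior'' boxes (using two extreme points of earlier/later boxes as the flanking points $(x_1,y_1)$ and, say, nothing below, combined with Lemma~\ref{lm-multiint2} for the extreme cases) so that within each box $\mu$ is quasirandom; (iii) a density constraint of the form $d(21,\mu)=(1-\alpha)^2\sum_{i=0}^\infty\alpha^{2i}$ analogous to~(\ref{eq-mono1b}); and (iv) an integral constraint, expressible via Theorem~\ref{thm-express-mu}, that pins down the sizes of the boxes, playing the role of~(\ref{eq-mono2}).

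The key steps, in order, would be: First, establish that (i) forces the support into the stacked-box form. This is essentially identical to the transitivity/equivalence-relation argument in Theorem~\ref{thm-mono}: define $R$ on the support by $(x,y)R(x',y')$ iff the points are equal or ``incomparable'' (one coordinate larger, the other smaller), show $R$ is an equivalence relation whose classes project to internally disjoint equal-length intervals on both axes, ordered consistently --- except that now, since we have not forbidden $12$-patterns inside a class, each class can carry a two-dimensional measure, so the support is $\bigcup_i ([z_i,z_i']\times[z_i,z_i'] \cap \text{supp}\,\mu)$. Second, apply Lemmas~\ref{lm-multiint2} and~\ref{lm-multiint3}: for any two support points inside the same box $B_i$ with $x_1<x_2$, $y_1>y_2$, Lemma~\ref{lm-multiint2} forces $\mu$ on $[x_1,x_2]\times[y_2,y_1]$ to be a multiple of the uniform measure; combined with Lemma~\ref{lm-multiint3} applied to triples straddling box boundaries, and the fact that the boxes tile $[0,1]$, conclude that $\mu|_{B_i}=c_i\lambda_{B_i}$ for each $i$, where $c_i$ is the mass of $B_i$ (and is forced to equal the side length of $B_i$ by the uniform-marginal property, since the box is a square). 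Third, use the integral constraint (iv) to derive a recursion $z_{i+1}-z_i=(1-\alpha)(1-z_i)$ exactly as~(\ref{eq-zz}) is derived in Theorem~\ref{thm-mono} --- here the natural candidate for the integrand is something like $\bigl(x+y-2F_\mu(x,y)-\tfrac{2\alpha}{1-\alpha}F_\mu(x,y)\bigr)^2$ or a variant tailored so that substituting the box structure forces the geometric spacing; one must check that $\mu^r_\alpha$ itself satisfies it. Fourth, set up the $\beta_j$ quantities as in Theorem~\ref{thm-mono}, rewrite $d(21,\mu)=(1-\alpha)^2\sum_i \alpha^{2(i-1)}\prod_{j\le i}\beta_j^2$, and conclude from (iii) that all $\beta_j=1$, hence $z_i=1-\alpha^{i-1}$ and $\mu=\mu^r_\alpha$.

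The main obstacle, as the authors themselves flag (``several new technical difficulties''), will be Step~2: verifying that Lemmas~\ref{lm-multiint2} and~\ref{lm-multiint3} together genuinely force \emph{uniformity inside every box}, not merely between pairs of support points in certain configurations. The subtlety is that Lemma~\ref{lm-multiint3} only applies when there exist flanking support points $(x_1,y_1)$ and $(x_3,y_3)$ in \emph{other} boxes with the right order relations, which requires knowing a priori that there is at least one box strictly before and strictly after the one in question --- so the first and last boxes (and the degenerate point-boxes, and the case where $\II_0$ might be finite) need separate treatment, presumably via Lemma~\ref{lm-multiint2} and a limiting argument as one takes $x_1,x_2$ towards the box corners. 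A second delicate point is confirming that the quasirandom measure on each individual square box is genuinely pinned down: being a uniform multiple of $\lambda_{B_i}$ on every sub-rectangle determined by two support points, together with having the correct (uniform) marginals when restricted to $B_i$, should force $\mu|_{B_i}=\lambda(B_i)^{1/2}\cdot$ (normalized uniform on $B_i$), but one should double-check that the marginal condition propagates correctly to the sub-boxes. Finally, one must exhibit the explicit integrand in~(iv) and check the algebra of the substitution $F_\mu(x,y)=z_i$, $x+y\in[2z_i,2z_i']$ inside box $B_i$; this is routine but needs care since $F_\mu$ is no longer linear on a box (it is quadratic there), so the constraint should be arranged to depend only on values of $F_\mu$ at box corners or integrated in a way that collapses the quadratic part.
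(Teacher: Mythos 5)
Your proposal has two genuine gaps, the first of which is fatal as stated. Your condition (i), $d(231,\mu)+d(312,\mu)=0$, is not satisfied by $\mu^r_\alpha$ itself: each diagonal block of $\mu^r_\alpha$ carries a uniform two-dimensional measure, and a uniform square contains every pattern (in particular $231$ and $312$) with positive density. So your proposed constraint set is inconsistent with the target permuton and cannot force it. The paper's proof does define the same relation $R$ on the support, but explicitly notes that $R$ is \emph{not} an equivalence relation here; it passes to the transitive closure $R_0$ and derives the stacked-box decomposition from the structure of the $R_0$-classes together with the uniform marginals and Lemmas~\ref{lm-multiint2} and~\ref{lm-multiint3} (the lemmas are also what rule out the degenerate case $\rho(A)=0$ and force each class to fill its whole square uniformly). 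Your conditions (ii) are in the right spirit and match the paper's use of $S_0$. (A smaller slip: the correct value of $d(21,\mu^r_\alpha)$ is $\tfrac{(1-\alpha)^2}{2}\sum\alpha^{2i}$, with an extra factor $\tfrac12$ coming from the uniform blocks.)

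The second gap is your step (iv). An integrand built only from $x$, $y$ and $F_\mu(x,y)$, such as the variant of (\ref{eq-mono2}) you suggest, does not vanish on $\mu^r_\alpha$: inside a block $[z,z']^2$ the quantity $1-x-y+F_\mu$ equals $(1-z')+\tfrac{(z'-x)(z'-y)}{z'-z}$ while $x+y-2F_\mu$ is purely bilinear in $(x-z,\,y-z)$, so evaluating near the corner $(z',z')$ forces $1-z'=0$, a contradiction. (In the monotone case this worked only because $x+y$ is constant along each antidiagonal segment.) The paper resolves this by introducing the rooted (flag) densities $F_\mu^{\nwarrow},f_\mu^{\nwarrow},\dots$ and the constraint (\ref{eq-rand1b}), forced via Theorem~\ref{thm-express-mu-general} rather than Theorem~\ref{thm-express-mu}; the ratios such as $f_\mu^{\nearrow}/f_\mu^{\searrow}=y_2/y_1$ recover the local coordinates inside a block and collapse the quadratic part, yielding the recursion $z'=z+(1-\alpha)(1-z)$. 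Your closing steps (the $\beta_j$ bookkeeping and the conclusion $z_i=1-\alpha^{i-1}$) do match the paper once the recursion and the corrected value of $d(21,\mu)$ are in hand, but the two ingredients above are precisely the ``new technical difficulties'' and are missing from your argument.
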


\begin{proof}
Let $S_0$ be the union of the two sets of permutations from Lemmas~\ref{lm-multiint2}~and~\ref{lm-multiint3}.
Next, consider the following eight functions:
\[
\begin{array}{cclcccl}
F_\mu^\nwarrow(x,y) & = & F_\mu^{2\overline{1}}(x,y)\;\mbox{,} & \hskip 1cm &
f_\mu^\nwarrow(x,y) & = & F_\mu^{23\overline{1}}(x,y)+F_\mu^{32\overline{1}}(x,y)\;\mbox{,} \\
F_\mu^\nearrow(x,y) & = & F_\mu^{\overline{1}2}(x,y)\;\mbox{,} & \hskip 1cm &
f_\mu^\nearrow(x,y) & = & F_\mu^{\overline{2}31}(x,y)\;\mbox{,} \\
F_\mu^\swarrow(x,y) & = & F_\mu^{1\overline{2}}(x,y)\;\mbox{,} & \hskip 1cm &
f_\mu^\swarrow(x,y) & = & F_\mu^{31\overline{2}}(x,y)\;\mbox{,} \\
F_\mu^\searrow(x,y) & = & F_\mu^{\overline{2}1}(x,y)\;\mbox{,} & \hskip 1cm &
f_\mu^\searrow(x,y) & = & F_\mu^{\overline{3}12}(x,y)+F_\mu^{\overline{3}21}(x,y)\;\mbox{.}
\end{array}
\]
To save space in what follows, we often omit parameters when no confusion can arise, e.g.,
we write $F_\mu^\searrow$ for the value $F_\mu^\searrow(x,y)$ if $x$ and $y$ are clear from the context.

We claim that any permuton satisfying the following three conditions is equal to $\mu^r_{\alpha}$:
\begin{equation}
d(\sigma,\mu) = d(\sigma,\mu^r_{\alpha}) \mbox{ for every $\sigma\in S_0$,}  \label{eq-rand1}
\end{equation}
\begin{align}
&\int\limits_{[0,1]^2}\Big((1-\alpha)\lt(F_\mu^\nearrow f_\mu^\searrow-F_\mu^\searrow f_\mu^\nearrow\rt)f_\mu^\nwarrow \nonumber\\
&\qquad              -
                    \alpha\big(F_\mu^\nwarrow f_\mu^\nwarrow f_\mu^\searrow+        F_\mu^\searrow f_\mu^\nwarrow f_\mu^\searrow+
                           F_\mu^\nwarrow f_\mu^\swarrow f_\mu^\searrow+
                           F_\mu^\searrow f_\mu^\nwarrow f_\mu^\nearrow\big)
		    \Big)^2\;\dif\mu = 0 \; , \label{eq-rand1b}
\end{align}
and
\begin{equation}
d(21,\mu) = \frac{(1-\alpha)^2}{2}\sum\limits_{i=0}^\infty \alpha^{2i} \;\mbox{.}  \label{eq-rand2}
\end{equation}
This would prove the finite forcibility of $\mu^r_{\alpha}$ by Theorem~\ref{thm-express-mu-general}.

Suppose that a permuton $\mu$ satisfies (\ref{eq-rand1}),  (\ref{eq-rand1b}), and (\ref{eq-rand2}).
Let $X$ be the support of $\mu$ and consider the binary relation $R$ defined on the support of $\mu$ such that
$(x,y)R(x',y')$ if
\begin{itemize}
\item $x=x'$ and $y=y'$,
\item $x<x'$ and $y>y'$, or
\item $x>x'$ and $y<y'$.
\end{itemize}
Unlike in the proof of Theorem~\ref{thm-mono},
the relation $R$ need not be an equivalence relation.
Instead, we consider the transitive closure $R_0$ of $R$ and
let $\RRR$ be the set of the equivalence classes of $R_0$.

We define $\rho((x,y),(x',y'))$,
where $(x,y)$ and $(x',y')$ are two points of the support of $\mu$ such that $(x,y)R(x',y')$,
as follows
\[
\rho\lt((x,y),(x',y')\rt)=\left\{
  \begin{array}{cl}
  \frac{\mu\lt([x,x']\times [y',y]\rt)}{(x'-x)(y-y')} & \mbox {if $x<x'$ and $y>y'$,} \\
  \frac{\mu\lt([x',x]\times [y,y']\rt)}{(x-x')(y'-y)} & \mbox {if $x>x'$ and $y<y'$, and} \\
  0 & \mbox{otherwise.}
  \end{array} \right. \]
Since $\mu$ satisfies (\ref{eq-rand1}),
Lemma~\ref{lm-multiint2} implies that
any three points $(x,y)$, $(x',y')$ and $(x'',y'')$ of the support of $\mu$ such that $(x,y)R(x',y')$ and $(x',y')R(x'',y'')$
satisfy $\rho((x,y),(x',y'))=\rho((x',y'),(x'',y''))$.
In particular, the quantity $\rho((x,y),(x',y'))$
is the same for all pairs of points $(x,y)$ and $(x',y')$ with $(x,y)R(x',y')$ lying in the same equivalence class of $R_0$.
So, we may define $\rho(A)$ to be this common value for each equivalence class $A\in\RRR$
or for a closure of such class.

As in the proof of Theorem~\ref{thm-mono},
we define $A_x$ and $A_y$ to be the projections of an equivalence class $A\in\RRR$ on the $x$ and $y$ axes.
The definition of $R$ yields that $A_x$ and $A_y$ are closed intervals for all $A\in\RRR$ and
these intervals are internally disjoint for different choices of $A\in\RRR$.
Since $\mu$ has uniform marginals, the intervals $A_x$ and $A_y$ must have the same length for every $A\in\RRR$.
As in the proof of Theorem~\ref{thm-mono},
we conclude that there exists a set $\II$ of internally disjoint closed intervals such that
\[\bigcup_{[z,z']\in\II} [z,z']=[0,1]\; \mbox{,}\]
the support of $\mu$ is a subset of
\[\bigcup_{[z,z']\in\II} [z,z']\times [z,z']\;\mbox{,}\]
and the interior of each of these squares intersects at most one class $A\in \RRR$.
Since some intervals contained in $\II$ may be formed by single points,
we define $\II_0$ to be the subset of $\II$ containing the intervals of positive length.

Let $[z,z']\in\II_0$ and let $A$ be the closure of the corresponding equivalence class from $\RRR$.
Let $f(x)$, $x\in [z,z']$, be the minimum $y$ such that $(x,y)$ belongs to $A$;
similarly, $g(x)$ denotes the maximum such $y$.

Assume first that $\rho(A)>0$. Since $\mu$ has uniform marginals,
it must holds that $g(x)-f(x)=\rho(A)^{-1}$ for every $x\in (z,z')$.
From \eqref{eq-rand1} and Lemma~\ref{lm-multiint2} we see that the functions $f$ and $g$ are non-decreasing,
and similarly \eqref{eq-rand1} and Lemma~\ref{lm-multiint3} imply that $f$ and $g$ are non-increasing.
We conclude that $A=\lt([z,z']\times [z,z']\rt)$ and
$\rho(A)=(z'-z)^{-1}$.

Assume now that $\rho(A)=0$.
Lemma~\ref{lm-multiint3} and (\ref{eq-rand1}) imply that
if $(x,y)\in (z,z')\times (z,z')$ belongs to the support of $\mu$,
then $\mu([x,z']\times [z,y])=0$ (otherwise, $\rho(A)>0$).
But then $(x,y)$ cannot be in relation $R$ with another point of the support of $\mu$.
So, we conclude that the case $\rho(A)=0$ cannot appear.

The just presented arguments show the support of the measure $\mu$
is equal to
\[\bigcup_{[z,z']\in\II} [z,z']\times [z,z']\]
and the measure is uniformly distributed inside each square $[z,z']\times [z,z']$, $[z,z']\in\II_0$.

Let $[z,z']$ be one of the intervals from $\II_0$.
Recall that we have argued that
\[\mu\big([0,z]\times[0,z]\cup [z,z']\times [z,z']\cup [z',1]\times [z',1]\big)=1\]
and
the measure $\mu$ is uniform inside the square $[z,z']\times [z,z']$ (see Figure~\ref{fig-rand-zz}).
By (\ref{eq-rand1b}), the following holds for almost every point of the support of $\mu$:
\begin{eqnarray}
(1-\alpha)\lt(F_\mu^\nearrow f_\mu^\searrow-F_\mu^\searrow f_\mu^\nearrow\rt)f_\mu^\nwarrow &=&
                   \alpha\big(F_\mu^\nwarrow f_\mu^\nwarrow f_\mu^\searrow+
                          F_\mu^\searrow f_\mu^\nwarrow f_\mu^\searrow+\nonumber\\
& &                       F_\mu^\nwarrow f_\mu^\swarrow f_\mu^\searrow+
                          F_\mu^\searrow f_\mu^\nwarrow f_\mu^\nearrow\big)\;\mbox{.}
\label{eq-rand1c}
\end{eqnarray}
In particular, this holds for all points in $[z,z']\times [z,z']$
since the functions appearing in (\ref{eq-rand1c}) are continuous.

\begin{figure}
\begin{center}
\epsfbox{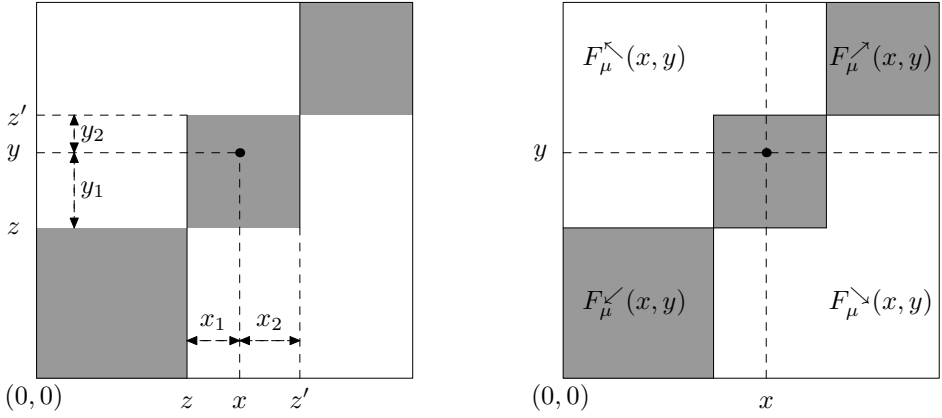}
\end{center}
\caption{Notation used in equalities (\ref{eq-rand1d}) and (\ref{eq-rand1f}).
         Areas that can contain the support of $\mu$ are drawn in grey.}
\label{fig-rand-zz}
\end{figure}

Let $(x,y)$ be a point from $(z,z')\times (z,z')$. Let $x_1=x-z$, $x_2=z'-x$, $y_1=y-z$, and $y_2=z'-y$ (see Figure~\ref{fig-rand-zz}).
Since all the quantities appearing in (\ref{eq-rand1c}) are positive,
we may rewrite (\ref{eq-rand1c}) as
\begin{equation}
(1-\alpha)\left(F_\mu^\nearrow-F_\mu^\searrow\frac{f_\mu^\nearrow}{f_\mu^\searrow}\right)=
   \alpha \left(F_\mu^\nwarrow+F_\mu^\searrow+F_\mu^\nwarrow\frac{f_\mu^\swarrow}{f_\mu^\nwarrow}+F_\mu^\searrow\frac{f_\mu^\nearrow}{f_\mu^\searrow}\right)\;\mbox{.}
\label{eq-rand1d}
\end{equation}
Observe that $F_\mu^\nearrow(x,y)=\mu([x,1]\times [y,1])$,
$F_\mu^\nwarrow(x,y)=\mu([z,x]\times [y,z'])=\frac{x_1y_2}{z'-z}$, and
$F_\mu^\searrow(x,y)=\mu([x,z']\times [z,y])=\frac{x_2y_1}{z'-z}$.
Further observe that
\[
\frac{f_\mu^\nearrow(x,y)}{f_\mu^\searrow(x,y)}=\frac{\frac{2x_2^2y_1y_2}{2(z'-z)^2}}{\frac{x_2^2y_1^2}{(z'-z)^2}}=\frac{y_2}{y_1}~~\mbox{  and  }~~
\frac{f_\mu^\swarrow(x,y)}{f_\mu^\nwarrow(x,y)}=\frac{\frac{2x_1^2y_1y_2}{2(z'-z)^2}}{\frac{x_1^2y_2^2}{(z'-z)^2}}=\frac{y_1}{y_2}\;\mbox{.}
\]
Plugging these observations in (\ref{eq-rand1d}), we obtain that
\begin{equation}
(1-\alpha)\left(\mu([x,1]\times [y,1])-\frac{x_2y_2}{z'-z}\right)=\alpha\frac{x_1y_2+x_2y_1+x_1y_1+x_2y_2}{z'-z}\;\mbox{.}
\label{eq-rand1f}
\end{equation}
Since $x_1+x_2=y_1+y_2=z'-z$ and $\frac{x_2y_2}{z'-z}=\mu([x,z']\times[y,z'])$,
we obtain from (\ref{eq-rand1f}) that
\begin{equation}
(1-\alpha)\mu([z',1]\times [z',1])=\alpha\frac{(z'-z)^2}{z'-z}=\a(z'-z)\;\mbox{.}
\label{eq-rand1g}
\end{equation}
Finally, we substitute $1-z'$ for $\mu([z',1]\times[z',1])$ in (\ref{eq-rand1g}) and get the following:
\begin{equation}
z'=z+(1-\alpha)(1-z)\;\mbox{.}\label{eq-rand-zz}
\end{equation}
So, we conclude that the right end point of every interval in $\II_0$ is uniquely determined by its left end point.

Let $Z$ be the set formed by the left end points of intervals in $\II_0$.
As in the proof of Theorem~\ref{thm-mono}, for a positive integer $i$,
let $z_i$ be the $i$th smallest element of $Z$.
Notice that the existence of minimum elements follows from (\ref{eq-rand-zz}).
If $Z$ is finite, we set $z_k=1$ for $k>|Z|$.

We derive from the definition of $Z$ and from (\ref{eq-rand-zz}) that
\begin{eqnarray*}
\II_0 & = & \lt\{[z_i,z_i+(1-\alpha)(1-z_i)], ~ i\in \mathbb{N}^+\rt\}\setminus\lt\{[1,1]\rt\}\;\mbox{.}
\end{eqnarray*}
Consequently, we obtain
\begin{equation}
d(21,\mu)=\sum_{i=1}^\infty \frac{(1-\alpha)^2 (1-z_i)^2}{2}=\frac{(1-\alpha)^2}{2}\sum_{i=1}^\infty (1-z_i)^2\;\mbox{.}\label{eq-rand-d21}
\end{equation}
Analogously to the proof of Theorem~\ref{thm-mono},
for $j\in \mathbb{N}$, we define $\beta_j\in [0,1]$ as follows:
\[\beta_j=\left\{
          \begin{array}{ll}
             1-z_1 & \mbox{for } j=0,\\
	  \frac{1-z_j}{\alpha(1-z_{j-1})} & \mbox{if $z_{j}\neq 1$ and $j>0$, and} \\
	  0 & \mbox{otherwise.}
	  \end{array}
          \right.\]
The equation (\ref{eq-rand-d21}) can be rewritten as
\begin{equation}
d(21,\mu)=\frac{(1-\alpha)^2}{2}\sum_{i=1}^\infty\alpha^{2(i-1)}\prod_{j=1}^i\beta_j^2;\mbox{.}
\end{equation}
Hence, the equality (\ref{eq-rand2}) can hold only if $\beta_j=1$ for every $j$,
i.e., $z_i=1-\alpha^{i-1}$.
This implies that the permutons $\mu$ and $\mu^r_\alpha$ are identical.
\end{proof}

\section{Graphons with infinite structure}
\label{sect-graphon}

In this section, we show that two types of graphons similar to the two types of finitely forcible permutons
from Section~\ref{sect-infinite} are not finitely forcible.
We start with graphons $W_{\mu^m_\alpha}$ associated with permutons $\mu^m_\alpha$, $\alpha\in(0,1)$.

\subsection{Union of complete graphs}
\label{sect-clique}

In this subsection we focus on graphons with $d(P_3,W)=0$ where $P_3$ is the path on $3$ vertices.
We start with the following lemma, which seems to be of be of independent interest.
Informally, the lemma asserts that any finitely forcible graphon with zero density of $P_3$ can be forced
by finitely many densities of complete graphs.

\begin{lemma}
\label{lm-clique}
If $W_0$ is a finitely forcible graphon and $d(P_3,W_0)=0$,
then there exists an integer $\ell_0$ such that
any graphon $W$ with $d(P_3,W)=0$ and $d(K_\ell,W)=d(K_\ell,W_0)$ for $\ell\leq \ell_0$ is weakly isomorphic to $W_0$.
\end{lemma}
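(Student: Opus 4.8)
The plan is to analyze the structure of any graphon $W$ with $d(P_3,W)=0$. A graphon with no induced path on three vertices must, up to weak isomorphism, be a "union of cliques": there is a partition of $[0,1]$ (up to measure zero) into measurable sets $\{C_i\}_{i\in\II}$ such that $W=1$ almost everywhere on $C_i\times C_i$ for each $i$ and $W=0$ almost everywhere off $\bigcup_i C_i\times C_i$. Indeed, $d(P_3,W)=0$ forces that the relation $x\sim y$ iff $W(x,y)=1$ (defined off a null set) is, after cleaning up, an equivalence relation whose classes are the $C_i$; the argument is the measure-theoretic analogue of the fact that a $P_3$-free graph is a disjoint union of cliques, and is standard (one uses that $\int_{[0,1]^2}W(1-W)\,\dif\lambda=0$ and that the density of $P_3$ controls the measure of triples $x,y,z$ with $W(x,y)=W(y,z)=1$, $W(x,z)=0$). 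So $W$ is determined up to weak isomorphism by the multiset of values $\{c_i\}$, where $c_i=\lambda(C_i)$, with $\sum_i c_i\le 1$ (a set of positive measure where $W\equiv 0$ may be viewed as infinitely many classes of measure zero, or equivalently we allow $\sum c_i<1$).

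The next step is to observe that for such a "clique graphon" the clique densities are the power sums of the $c_i$: one computes
\[
d(K_\ell,W)=\sum_{i} c_i^{\,\ell}\qquad(\ell\ge 2),
\]
since a $W$-random $K_\ell$ arises exactly when all $\ell$ sampled points land in a common class $C_i$. Thus knowing $d(K_\ell,W)$ for $\ell\le\ell_0$ is the same as knowing the first $\ell_0$ power sums $p_\ell=\sum_i c_i^\ell$ of the (summable, nonnegative) sequence $(c_i)$. Now $W_0$ is finitely forcible, say by graphs $H_1,\dots,H_k$; let $N=\max_j|H_j|$. The idea is that, among $P_3$-free graphons, the densities $d(H_j,\cdot)$ are themselves polynomials in the power sums $p_2,\dots,p_N$ (expand the integral defining $d(H_j,W)$ over the cliques: each $H_j$ contributes only if it is itself a disjoint union of cliques, and then its density is a fixed polynomial in the $c_i$ that is symmetric of degree $\le N$, hence a polynomial in $p_1,\dots,p_N$, and $p_1$ is not free but $p_1=\sum c_i$ which equals a density too — more cleanly, work with the monotone family $p_2\ge p_3\ge\cdots$). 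So if $W$ is $P_3$-free and matches $W_0$ on $d(K_\ell,\cdot)$ for all $\ell\le\ell_0:=N$, then $W$ matches $W_0$ on every $d(H_j,\cdot)$, hence by finite forcibility $W$ is weakly isomorphic to $W_0$. Setting $\ell_0=N$ finishes the proof.

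The main obstacle — and the step deserving the most care — is the very first one: proving rigorously that $d(P_3,W)=0$ forces the clean "union of cliques" structure up to weak isomorphism, including the handling of a possible positive-measure zero-set and of classes of measure zero, and making sure the bookkeeping of $\sum c_i$ versus $\sum c_i^\ell$ is consistent. One has to be slightly careful that two clique graphons are weakly isomorphic iff they have the same multiset of positive class-measures, and that all the relevant densities really are continuous functions of the (weakly convergent) sequence $(c_i)$ so that the polynomial-in-power-sums identities hold for the limit object and not merely for finite graphs. Once this structural normal form is in hand, the reduction "match finitely many clique densities $\Rightarrow$ match the power sums $p_2,\dots,p_N$ $\Rightarrow$ match $d(H_j,\cdot)$ $\Rightarrow$ weakly isomorphic to $W_0$" is routine.
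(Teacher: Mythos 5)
Your proposal is correct in substance but follows a genuinely different route from the paper. You first classify all graphons with $d(P_3,W)=0$ up to weak isomorphism as ``clique graphons'' $W^c_{\vec c}$ (possibly with $\sum_i c_i<1$), then observe $d(K_\ell,W)=\sum_i c_i^\ell$ for $\ell\ge 2$, and finally express every $d(H,W)$ as a polynomial in these power sums; this is all true (and the power-sum bookkeeping is even cleaner than you fear: $p_1$ never enters, since induced densities are integer combinations of homomorphism densities $t(\bigcup_j K_{a_j},W)=\prod_{a_j\ge 2}p_{a_j}$). The paper instead proves, by a purely formal induction on the number of vertices plus components, that for \emph{any} graphon with $d(P_3,W)=0$ the density of every $n$-vertex graph is a fixed linear combination of $d(K_1,W),\ldots,d(K_n,W)$: the key identity is that $d(G_1,W)\,d(G_2\cup\cdots\cup G_k,W)$ equals a positive combination of $d(G,W)$ and the densities of the graphs obtained by merging $G_1$ with one other component, which lets one solve for $d(G,W)$. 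The trade-off is exactly the step you flag as the main obstacle: your argument needs the measure-theoretic normal form for $P_3$-free graphons, which you only sketch (it is true, e.g.\ via the fact that $W$-random graphs are a.s.\ $P_3$-free, hence disjoint unions of cliques, plus a compactness argument on the sorted normalized clique sizes, but it is a genuine piece of work that should be written out). The paper's induction sidesteps that structural analysis entirely and never needs to know what $W$ looks like; on the other hand, your route yields the explicit description $d(K_\ell,W)=\sum_i c_i^\ell$, which is the picture actually exploited in the proof of Theorem~\ref{thm-clique}. Either way, taking $\ell_0$ to be the maximum order of the graphs forcing $W_0$ completes the proof, so your plan is sound provided you supply the structural lemma in full.
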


\begin{proof}
To prove the statement of the lemma,
it is enough to show the following claim:
the density of any $n$-vertex graph $G$ in a graphon $W$ with $d(P_3,W)=0$
can be expressed as a combination of
densities of $K_1,\ldots$, $K_n$ in $W$.
We proceed by induction on $n+k$ where $n$ and $k$ are the numbers of vertices and components of $G$ respectively.
If $n=k=1$, there exists only a single one-vertex graph $K_1$ and the claim holds.

Assume now that $n+k>2$.
If $G$ is not a union of complete graphs, then $d(G,W)=0$ since $d(P_3,W)=0$.
So, we assume that $G$ is a union of $k$ complete graphs $G_1,\ldots,G_k$,
i.e., $G=G_1\cup\cdots\cup G_k$.
If $k=1$, then $G=K_n$ and the claim clearly holds. So, we assume $k>1$.

For $2\leq i\leq k$, we denote
\[ H_i = (G_1+G_i)\cup\bigcup_{j\in[k]\setminus\{1,i\}}G_j\; .\]
Observe that the following holds:
\begin{align}
&d(G_1,W)d(G_2\cup\cdots\cup G_k,W)=\nonumber\\
                                  & \qquad p_1\;\cdot\;d\left(G_1\cup\cdots\cup G_k,W\right)+\sum_{i=2}^k p_i\;\cdot\;d\left(H_i,W\right)
\label{eq-clique-mul}
\end{align}
where $p_1$ is the probability that a set $V$ of randomly chosen $|G_1|$ vertices of the graph $G_1\cup\cdots\cup G_k$
induces a complete graph and the graph $(G_1\cup\cdots\cup G_k)\setminus V$ is isomorphic to $G_2\cup\cdots\cup G_k$, and
$p_i$, $i>1$, is the probability that a set $V$ of randomly chosen $|G_1|$ vertices of $H_i$
induces a complete graph and the graph $H_i\setminus V$ is isomorphic to $G_2\cup\cdots\cup G_k$.
To see (\ref{eq-clique-mul}), observe that
the product $d(G_1,W)d(G_2\cup\cdots\cup G_k,W)$ is equal to the product of
the probability that a $W$-random graph of order $|G_1|$ is isomorphic to $G_1$ and
the probability that a $W$-random graph of order $|G_2|+\cdots+|G_k|$ is isomorphic to $G_2\cup\cdots\cup G_k$.
This is equal to the probability that
randomly chosen $|G_1|$ vertices of a $W$-random graph of order $|G_1|+\cdots+|G_k|$ induce a subgraph isomorphic to $G_1$ and
the remaining vertices induce a subgraph isomorphic to $G_2\cup\cdots\cup G_k$.
This probability is equal to the right hand side of (\ref{eq-clique-mul}).

By induction, $d(G_2\cup\cdots\cup G_k,W)$ and $d\left(H_i,W\right)$, $2\leq i\leq k$,
can be expressed as combinations of densities of complete graphs of order at most $n$ in $W$.
Rearranging the terms of (\ref{eq-clique-mul}), we obtain that $d(G_1\cup\cdots\cup G_k,W)$
is equal to a combination of densities of complete graphs of order at most $n$ in $W$.
\end{proof}

For a sequence of non-negative reals $\vec a=( a_i)_{i\in\NN}$ such that $\sum_{i=1}^\infty a_i=1$,
define a graphon $W^c_{\vec a}$ such that $W(x,y)=1$ if and only if there exists $j\in\NN$ such that
\[\sum_{i=1}^{j-1} a_i\le x,y\le\sum_{i=1}^{j} a_i\;\mbox{.}\]
We consider a particular case of this graphon $W^c_\alpha$ for $\alpha\in (0,1)$:
$W^c_\alpha=W^c_{\vec a}$ for $a_i=(1-\alpha)\alpha^{i-1}$.
Observe that $W^c_\alpha=W_{\mu^m_\alpha}$. The main result of this subsection asserts that
unlike the associated permuton $\mu^m_\alpha$, the graphon $W^c_\alpha$ is not finitely forcible.
Although it immediately follows as a corollary of the more general Theorem~\ref{thm-quasi}, we give its proof here to increase the readability of the paper.

\begin{theorem}
\label{thm-clique}
For every $\alpha\in (0,1)$, the graphon $W^c_\alpha=W_{\mu^m_\alpha}$ is not finitely forcible.
\end{theorem}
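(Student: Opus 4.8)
The plan is to exhibit, for each $\alpha\in(0,1)$ and each finite set $\{H_1,\ldots,H_k\}$ of graphs that one might hope forces $W^c_\alpha$, a graphon $W'$ that is \emph{not} weakly isomorphic to $W^c_\alpha$ but matches all the densities $d(H_i,W^c_\alpha)$. Since $W^c_\alpha$ has $d(P_3,W^c_\alpha)=0$, Lemma~\ref{lm-clique} tells us that if $W^c_\alpha$ were finitely forcible, it would be forced among all $P_3$-free graphons by finitely many clique densities $d(K_1,W),\ldots,d(K_{\ell_0},W)$ alone. So it suffices to build, for every fixed $\ell_0$, a $P_3$-free graphon $W'$ with $d(K_\ell,W')=d(K_\ell,W^c_\alpha)$ for all $\ell\le\ell_0$ but $W'\not\cong W^c_\alpha$. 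Every $P_3$-free graphon is (weakly isomorphic to) a disjoint union of cliques, i.e.\ of the form $W^c_{\vec a}$ for a non-negative sequence $\vec a$ summing to $1$ (after merging zero parts); and $d(K_\ell,W^c_{\vec a})=\sum_{i} a_i^\ell$. Thus the whole problem reduces to a statement about sequences: for $\vec a=\bigl((1-\alpha)\alpha^{i-1}\bigr)_{i\ge1}$ and any $\ell_0$, find a different non-negative summable-to-one sequence $\vec b$ with $\sum_i b_i^\ell=\sum_i a_i^\ell$ for $\ell=1,\ldots,\ell_0$.

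The key step is this power-sum matching. Here is one clean way to do it. The quantities $m_\ell:=\sum_i a_i^\ell$ are the moments of the discrete probability measure $\nu$ on $[0,1]$ putting mass $a_i$ at the point $a_i$ — equivalently, $m_\ell=\int x^{\ell-1}\dif\nu(x)$. It is a standard fact that a compactly supported measure is \emph{not} determined by finitely many of its moments: given $\nu$ supported on $\{(1-\alpha)\alpha^{i-1}\}$ and any $\ell_0$, one can perturb $\nu$ to a different finitely-supported non-negative measure $\nu'$ with the same first $\ell_0$ moments, by a dimension count (the moment map from measures supported on $N\gg\ell_0$ chosen atoms to $\RR^{\ell_0}$ has positive-dimensional fibres, and the fibre through $\nu$ meets the non-negative orthant in more than one point). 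Concretely one can keep all atoms $a_i$ with $i>N$ fixed, redistribute the total mass $\sum_{i\le N}a_i$ among finitely many new atom-locations, and solve the resulting polynomial system $\sum_j b_j^\ell = \sum_{i\le N}a_i^\ell$, $\ell=1,\ldots,\ell_0$, which has a whole variety of solutions once the number of new atoms exceeds $\ell_0$; a small generic perturbation of the original solution stays in the open positive orthant and gives a genuinely different sequence $\vec b$. Then $W':=W^c_{\vec b}$ is $P_3$-free, has $d(K_\ell,W')=d(K_\ell,W^c_\alpha)$ for $\ell\le\ell_0$, and $W'\not\cong W^c_\alpha$ because the multiset of clique-component sizes (encoded by the full moment sequence) differs.

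I expect the main obstacle to be purely bookkeeping: verifying that the perturbation can be chosen to keep all entries strictly positive and the sum exactly $1$ while changing the multiset of values, and making the reduction through Lemma~\ref{lm-clique} airtight — namely that an arbitrary candidate forcing family $H_1,\ldots,H_k$ for $W^c_\alpha$ really does, via the lemma, collapse to a clique-density condition, so that matching finitely many clique densities among $P_3$-free graphons is genuinely all that must be defeated. Once that is in place, the argument is: assume $W^c_\alpha$ is finitely forcible; apply Lemma~\ref{lm-clique} to get $\ell_0$; apply the power-sum matching to produce $W^c_{\vec b}\not\cong W^c_\alpha$ agreeing on $d(K_1),\ldots,d(K_{\ell_0})$ and on $d(P_3)=0$; this contradicts the conclusion of Lemma~\ref{lm-clique}. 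Hence $W^c_\alpha$ is not finitely forcible.
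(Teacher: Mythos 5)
Your proposal is correct and follows essentially the same route as the paper: reduce via Lemma~\ref{lm-clique} to matching finitely many clique densities among $P_3$-free graphons, then perturb finitely many of the block sizes $a_i$ while preserving the first $\ell_0$ power sums. The paper makes your ``dimension count'' precise via the Implicit Function Theorem, using that the Jacobian of the power-sum map is the nonzero Vandermonde-type determinant $n!\prod_{j<j'}(x_{j'}-x_j)$ at $(a_1,\ldots,a_n)$ --- a check worth recording, since over $\RR$ a fibre of a polynomial map need not be positive-dimensional without such a rank condition.
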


\begin{proof}
Observe that $d\lt(P_3, W^c_\alpha\rt)=0$.
By Lemma~\ref{lm-clique},
it is enough to show that $W^c_\alpha$ is not finitely forcible with $S=\{P_3,K_1,\ldots,K_n\}$ for any $n\in \NN$,
i.e., by setting the densities of $P_3$ and the complete graphs of orders $1,\ldots,n$.
Suppose for the sake of contradiction that for some $n\in \NN$ the graphon $W^c_\alpha$ is uniquely determined by the densities of $P_3$ and $K_1,\ldots,K_n$.
Let $a_i=(1-\alpha)\alpha^{i-1}$.
Further, define functions $F_i(x_1,\ldots,x_{n+1}):\RR^{n+1}\to\RR$ for $i=1,\ldots,n$ as follows:
\begin{equation}
F_i(x_1,\ldots,x_{n+1})=\sum_{j=1}^{n+1}\left(x_j^i-a_j^i\right)\;\mbox{.}\label{eq-clique1}
\end{equation}
Observe that if it holds that $x_1+\cdots+x_{n+1}=a_1+\cdots+a_{n+1}$,
which is equivalent to $F_1(x_1,\ldots,x_{n+1})=0$,
then it also holds that
\begin{equation}
d(K_i,W^c_{\vec b})=d(K_i,W^c_\alpha)+F_i(x_1,\ldots,x_{n+1})\mbox{ for $i\in [n]$,}\label{eq-clique2}
\end{equation}
where $\vec b$ is the sequence with $b_i=x_i$ for $i \leq n+1$ and $b_i=a_i$ for $i>n+1$.
Hence, to obtain the desired contradiction, it suffices to prove that there exist functions $g_j(x_{n+1})$, $j\in [n]$, on some open neighborhood of $a_{n+1}$ such that
\begin{equation}
F_i(g_1(x_{n+1}),\ldots,g_n(x_{n+1}),x_{n+1})=0\mbox{ for every $i\in [n]$.}\label{eq-clique3}
\end{equation}
Indeed, if such functions $g_j(x_{n+1})$, $j\in [n]$, exist,
then (\ref{eq-clique2}) yields that
the densities of $K_1,\ldots,K_n$ in the graphon $W^c_{\vec b}$ with $b_i=g_i(x_{n+1})$ for $i\leq n$, $b_{n+1}=x_{n+1}$ and $b_i=a_i$ for $i>n+1$
equal their densities in the graphon $W^c_{\vec a}$.
This implies that $W^c_{\vec a}$ is not forced by the densities of $P_3$ and $K_1,\ldots,K_n$.

We now establish the existence of functions $g_1,\ldots,g_n$ satisfying (\ref{eq-clique3}) on some open neighborhood of $a_{n+1}$.
Observe that
\[\frac{\partial F_i}{\partial x_j}(x_1,\ldots,x_{n+1})=i\cdot x_j^{i-1}\;\mbox{.}\]
We consider the Jacobian matrix of the functions $F_1,\ldots,F_n$ with respect to $x_1,\ldots,x_n$.
The determinant of the Jacobian matrix is equal to
\begin{equation}
n!\prod_{1\le j<j'\le n} \left(x_{j'}-x_j\right)\;\mbox{.}
\end{equation}
Substituting $x_j=a_j$ for $j=1,\ldots,n$, we obtain that the Jacobian matrix has non-zero determinant.
In particular, the Jacobian is non-zero.
The Implicit Function Theorem now implies the existence of the functions $g_1,\ldots,g_n$
satisfying (\ref{eq-clique3}). This concludes the proof.
\end{proof}

\subsection{Union of random graphs}
\label{sect-quasi}

The graphons considered in the previous section were associated with the permutons $\mu^m_\alpha$.
We now consider graphons related to the permutons $\mu^r_\alpha$. In fact, we introduce a more
general concept.
Let $W$ be a graphon.
For a sequence of non-negative reals $\vec a=( a_i)_{i\in\NN}$ such that $\sum_{i=1}^\infty a_i=1$,
define a graphon $W_{\to\vec a}$ as follows.
Informally speaking, we take the graphon $W^c_{\vec{a}}$ and plant a copy of $W$ on each of its ``components''.
Formally, for $x,y\in [0,1)$, let $j_x$ and $j_y$ be the integers
such that
\begin{eqnarray*}
& \sum\limits_{i=1}^{j_x-1} a_i\le x<\sum_{i=1}^{j_x} a_i & \mbox{ and} \\
& \sum\limits_{i=1}^{j_y-1} a_i\le y<\sum_{i=1}^{j_y} a_i\;\mbox{.}
\end{eqnarray*}
If $j_x\not=j_y$, then $W_{\to\vec a}(x,y)=0$.
If $j_x=j_y$, then
\[W_{\to\vec a}(x,y)=W\left(\frac{x-\sum\limits_{i=1}^{j_x-1} a_i}{a_{j_x}},\frac{y-\sum\limits_{i=1}^{j_y-1} a_i}{a_{j_y}}\right)\;\mbox{.}\]
The set of pairs $(x,y)$ with one of the coordinates being equal to zero has measure zero,
so we can for example set the values of $W(x,y)$ for such pairs to be equal to zero.
Similarly as before,
we also define $W_{\to\alpha}$ for $\alpha\in (0,1)$ as $W_{\to\alpha}=W_{\to\vec a}$
where $a_i=(1-\alpha)\alpha^{i-1}$.

If $W^0_\rho$ is the graphon identically equal to $\rho\in [0,1]$,
then $W^0_{1,\to\vec a}$ is $W^c_{\vec a}$ (we put a comma to separate the two indices, the first referring to the density of edges inside connected components,
the second determining the sizes of those). More generally,
we use $W^r_{\rho,\alpha}$ for the graphon $W^0_{\rho,\to\vec a}$.
Examples can be found in Figure~\ref{fig-quasi}.

\begin{figure}
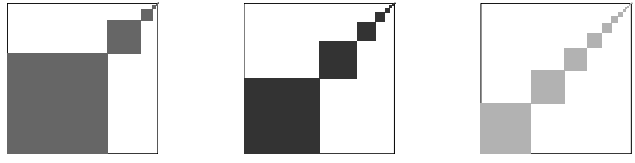

\begin{center}
\epsfbox{permforce.13} \hskip 10mm
\epsfbox{permforce.14} \hskip 10mm
\epsfbox{permforce.15}
\end{center}
\caption{The graphons $W^r_{1/2,1/3}$, $W^r_{3/4,1/2}$, and $W^r_{1/4,2/3}$.}\label{fig-quasi}
\end{figure}

Our main theorem asserts that a graphon $W_{\to\alpha}$ is not finitely forcible unless it is $W^0_0$.

\begin{theorem}
\label{thm-quasi}
For every $\alpha\in (0,1)$ and every graphon $W$,
if the graphon $W_{\to\alpha}$ is finitely forcible,
then $W$ is weakly isomorphic to $W^0_0$, i.e., the graphon $W_{\to\alpha}$ is identically equal to zero up to a set of measure zero.
\end{theorem}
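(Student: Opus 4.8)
The plan is to generalize the perturbation argument behind Theorem~\ref{thm-clique}; the genuinely new ingredient is understanding how subgraph densities in $W_{\to\vec a}$ depend on the size sequence $\vec a$. Write $m_n(\vec a)=\sum_i a_i^n$, so $m_1(\vec a)=1$. First I would record two structural facts. (i) For every graph $H$ on $n$ vertices, $d(H,W_{\to\vec a})$ is a polynomial in $m_2(\vec a),\dots,m_n(\vec a)$ whose coefficients depend only on $W$ and $H$: sampling $n$ points for a $W_{\to\vec a}$-random graph and recording which block each falls into yields a set partition $\pi$ of the $n$ vertices; the probability of a fixed $\pi$ is a symmetric polynomial of degree $n$ in $(a_i)$, hence a polynomial in $m_1,\dots,m_n$; and conditioned on $\pi$ the induced graph is a disjoint union, over the parts of $\pi$, of independent $W$-random graphs with no edges between parts, so the conditional probability of obtaining $H$ depends only on $W$, $H$ and $\pi$. (ii) If moreover $H$ is connected, then all $n$ sampled points must lie in one block, whence $d(H,W_{\to\vec a})=m_n(\vec a)\,d(H,W)$.

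Now suppose, for contradiction, that $W_{\to\alpha}=W_{\to\vec a}$ (with $a_i=(1-\alpha)\alpha^{i-1}$) is finitely forcible, say by graphs $H_1,\dots,H_m$ on at most $N$ vertices, and that $W$ is not weakly isomorphic to $W^0_0$, i.e.\ $\int_{[0,1]^2}W>0$. By fact (i) it suffices to exhibit a nonnegative sequence $\vec b$ with $b_i=a_i$ for $i>N+1$, with $m_n(\vec b)=m_n(\vec a)$ for $1\le n\le N$, and with $\{b_i\}\neq\{a_i\}$ as multisets: the conditions on the $m_n$ force $d(H_j,W_{\to\vec b})=d(H_j,W_{\to\vec a})$ for all $j$, so finite forcibility would make $W_{\to\vec b}$ weakly isomorphic to $W_{\to\vec a}$, and I will contradict this using $\{b_i\}\neq\{a_i\}$. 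To build $\vec b$, I apply the Implicit Function Theorem to $(b_1,\dots,b_{N+1})\mapsto(m_1,\dots,m_N)$ exactly as in Theorem~\ref{thm-clique}: its Jacobian with respect to $b_1,\dots,b_N$ equals $N!\prod_{1\le j<j'\le N}(b_{j'}-b_j)$, which is nonzero at $\vec b=\vec a$ since the $a_i$ are pairwise distinct. This gives a one-parameter family of solutions through $\vec a$; since the free coordinate varies along it, the family is not locally contained in the finite set of permutations of $(a_1,\dots,a_{N+1})$, so for all but finitely many parameter values near the base one the corresponding $\vec b$ is nonnegative and $(b_1,\dots,b_{N+1})$ is not a permutation of $(a_1,\dots,a_{N+1})$; fix such a $\vec b$.

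It remains to show that $W_{\to\vec b}$ weakly isomorphic to $W_{\to\vec a}$, together with $\int W>0$, forces $\{b_i\}=\{a_i\}$. The crux is the claim that $\int W>0$ implies that for every $n\ge 1$ there is a connected graph $H$ on $n$ vertices with $d(H,W)>0$. I would prove this by induction on $n$: sample $n$ independent uniform points $x_1,\dots,x_n\in[0,1]$ and let $E_n$ be the event that they span a connected graph in the support graph of $W$ (edges where $W>0$); then $\Pr[E_2]=\int W>0$, and if $\Pr[E_n]>0$ then, since on $E_n$ almost surely $W(x_n,\cdot)$ is not almost everywhere zero, the event $E_n\cap\{W(x_{n+1},x_n)>0\}$ has positive probability and is contained in $E_{n+1}$; a connected $n$-vertex graph realizing $E_n$ with positive probability then has positive density in $W$. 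Granting the claim, fact (ii) applied to such graphs gives $m_n(\vec a)\,d(H,W)=d(H,W_{\to\vec a})=d(H,W_{\to\vec b})=m_n(\vec b)\,d(H,W)$, hence $m_n(\vec a)=m_n(\vec b)$, for \emph{every} $n$. Equality of all power sums of the nonnegative summable sequences $(a_i)$ and $(b_i)$ then forces $\{a_i\}=\{b_i\}$ as multisets: for a summable nonnegative sequence $\vec c$, $m_n(\vec c)$ is asymptotic as $n\to\infty$ to (the multiplicity of $\max_i c_i$) times $(\max_i c_i)^n$, so one can peel off the common largest value and iterate. This is the desired contradiction, completing the proof.

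The main obstacle is the last paragraph: showing that a nontrivial $W$ has connected subgraphs of every order with positive density, so that weak isomorphism of the planted graphons pins down \emph{all} power sums of the block sizes rather than only the finitely many fixed by the forcing graphs, and then upgrading infinitely many power-sum identities to equality of the size multisets. The perturbation and Implicit-Function-Theorem step, by contrast, is essentially the one already carried out in the proof of Theorem~\ref{thm-clique}.
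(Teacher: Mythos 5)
Your proposal is correct, and its core is the same as the paper's: perturb the block-size sequence via the Implicit Function Theorem so that the first $N$ power sums (equivalently, the clique densities of $W^c_{\vec b}$) are preserved, and then observe that every density $d(G,W_{\to\vec b})$ with $|G|\le N$ is determined by $W$ together with these quantities. Your ``fact (i)'' (densities are polynomials in $m_2,\dots,m_n$) is just a repackaging of the paper's partition identity, which writes $d(G,W_{\to\vec b})$ as a sum over partitions $Q$ of the components of $G$ of $\prod_i d(Q_i,W)\cdot d(\bigcup_i K_{|Q_i|},W^c_{\vec b})$ and then invokes the conclusion of the proof of Theorem~\ref{thm-clique}. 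Where you genuinely add something is the final paragraph: the paper says ``it is enough to show'' that some $\vec b\neq\vec a$ matches all densities up to order $n$, but never verifies that the resulting $W_{\to\vec b}$ fails to be weakly isomorphic to $W_{\to\vec a}$ --- which is exactly where the hypothesis $\int W>0$ must enter, since for $W=W^0_0$ all the $W_{\to\vec b}$ coincide. Your argument (connected subgraphs of every order have positive density in a nontrivial $W$, hence weak isomorphism forces $m_n(\vec a)=m_n(\vec b)$ for \emph{all} $n$, hence equality of the size multisets, contradicting the choice of $\vec b$ off the finite set of permutations) supplies precisely this missing step, and it is sound; the only points deserving a touch more care are that the supremum of the values where the two multisets' multiplicities differ is attained (it is, by summability), and that ``$W(x_n,\cdot)$ not a.e.\ zero'' holds almost surely on $E_n$ (it does, since points with a.e.-vanishing fiber are a.s.\ isolated in the support graph). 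So: same construction as the paper, plus an explicit non-isomorphism verification that the paper leaves implicit.
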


\begin{proof}
It is enough to show that for every $n$, there exists $\vec b$ different from $\vec a$, $a_i=(1-\alpha)\alpha^{i-1}$, such that
\begin{equation}
d\lt(G,W_{\to\vec b}\rt)=d\lt(G,W_{\to\alpha}\rt) \mbox{ for every graph $G$ with $|G|\le n$.}\label{eq-quasi1}
\end{equation}
The proof of Theorem~\ref{thm-clique} yields that for every $n$, there exists such $\vec b$ different from $\vec a$ satisfying
\begin{equation}
d\lt(G,W^c_{\vec b}\rt)=d\lt(G,W^c_{\alpha}\rt)\mbox{ for every graph $G$ with $|G|\le n$.}\label{eq-quasi2}
\end{equation}
We claim that this $\vec b$ also satisfies (\ref{eq-quasi1}).
Also note that (\ref{eq-quasi2}) is non-zero only for graphs $G$ that are disjoint union of cliques.

Let $G$ be a graph with $n$ vertices and let $G_1,\ldots,G_k$ be the connected components of $G$.
Furthermore, let $\FF=\lt\{I_1,\ldots, I_\ell\rt\}$ be the partition of $[k]$ according to the isomorphism classes of the graphs $G_1,\ldots, G_k$, i.e.,
for every $i,j\in [\ell]$ with $i\neq j$
and every $a_1, a_2\in I_i$ and $a_3\in I_j$, the graphs $G_{a_1}$ and $G_{a_2}$ are isomorphic,
and the graphs $G_{a_1}$ and $G_{a_3}$ are not isomorphic.

Observe that
\begin{align}
&d\lt(G,W_{\to\vec b}\rt)=
\sum_{f:[k]\to\NN}
c(f)
\lt( \prod_{i=1}^\infty d\left(\bigcup_{j\in f^{-1}(i)} G_j,W\right)
b_i^{\lt|\bigcup\limits_{j\in f^{-1}(i)} G_j\rt|}\rt)\;
\label{eq-quasi4}
\end{align}
with the normalizing factor
\[
c(f)= \prod\limits_{m\in [\ell]} \frac{\prod\limits_{i=1}^\infty \lt|f ^{-1}(i)\cap I_m\rt|!}{\lt|I_m\rt|!},
\]
where we set $0!=1$ and the density $d(\emptyset,W)$ of the empty graph in the graphon $W$ to $1$.

We consider partitions of the set of connected components of $G$.
If $Q= \lt\{Q_1,\dots, Q_k\rt\}$ is such a partition, we slightly abuse the notation and identify $Q_i$ with the subgraph of $G$ induced by the components of $Q_i$.
In particular, $|Q_i|$ denotes the number of vertices in this subgraph..
Furthermore, we always view a partition $Q$ as a multiset, and also allow some of the $Q_i$'s to be empty.
Let $\cQ$ be the set of all such partitions.
The identity~\eqref{eq-quasi4} can now be rewritten as follows:
\begin{equation}
d\lt(G,W_{\to\vec b}\rt)=
\sum_{Q\in \cQ}\prod\limits_{i\in [k]}d(Q_i, W)
d\lt(\bigcup\limits_{i\in[k]} K_{\lt|Q_i\rt|},W_{\vec b}^c\rt),
\label{eq-quasi5}
\end{equation}
where $K_0$ is the empty graph.
Since $\vec b$ satisfies (\ref{eq-quasi2}), we obtain that it satisfies~\eqref{eq-quasi5}, and therefore also (\ref{eq-quasi1}).
\end{proof}

We immediately obtain the following two corollaries.

\begin{corollary}
\label{cor-quasi}
For every $\alpha\in (0,1)$ and every $\rho\in (0,1]$, the graphon $W^r_{\rho,\alpha}$ is not finitely forcible.
\end{corollary}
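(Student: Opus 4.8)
The plan is to derive Corollary~\ref{cor-quasi} directly from Theorem~\ref{thm-quasi} by applying the latter with $W = W^0_\rho$, the graphon identically equal to $\rho$. First I would observe that, by the notation established just before the statement of Theorem~\ref{thm-quasi}, we have $W^r_{\rho,\alpha} = W^0_{\rho,\to\vec a}$ where $a_i = (1-\alpha)\alpha^{i-1}$, which is exactly $W_{\to\alpha}$ for the choice $W = W^0_\rho$. Thus Theorem~\ref{thm-quasi} applies verbatim.

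Next I would argue by contradiction: suppose $W^r_{\rho,\alpha}$ were finitely forcible for some $\alpha \in (0,1)$ and $\rho \in (0,1]$. Then Theorem~\ref{thm-quasi}, applied with $W = W^0_\rho$, yields that $W^0_\rho$ is weakly isomorphic to $W^0_0$, the graphon identically equal to zero. But weak isomorphism preserves all subgraph densities, and in particular the edge density: $d(K_2, W^0_\rho) = \rho$ while $d(K_2, W^0_0) = 0$. Since $\rho > 0$, this is a contradiction, so $W^r_{\rho,\alpha}$ is not finitely forcible.

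I do not anticipate any genuine obstacle here; the corollary is an immediate specialization of Theorem~\ref{thm-quasi}, and the only thing to check is that the hypothesis $\rho \in (0,1]$ indeed rules out the degenerate case $W = W^0_0$ permitted in the conclusion of the theorem. The one minor point worth stating explicitly is that the notational identity $W^r_{\rho,\alpha} = (W^0_\rho)_{\to\alpha}$ holds on the nose from the definitions, so no separate verification of density equalities is needed. Accordingly, the proof should read essentially as:

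\begin{proof}
By definition, $W^r_{\rho,\alpha} = W^0_{\rho,\to\vec a} = (W^0_\rho)_{\to\alpha}$ where $a_i = (1-\alpha)\alpha^{i-1}$. If $W^r_{\rho,\alpha}$ were finitely forcible, then Theorem~\ref{thm-quasi} applied with $W = W^0_\rho$ would imply that $W^0_\rho$ is weakly isomorphic to $W^0_0$. However, $d(K_2, W^0_\rho) = \rho > 0 = d(K_2, W^0_0)$, a contradiction. Hence $W^r_{\rho,\alpha}$ is not finitely forcible.
\end{proof}
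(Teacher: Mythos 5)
Your proposal is correct and is exactly the intended argument: the paper derives Corollary~\ref{cor-quasi} as an immediate consequence of Theorem~\ref{thm-quasi} via the identification $W^r_{\rho,\alpha}=(W^0_\rho)_{\to\alpha}$, with the case $\rho>0$ excluded because $W^0_\rho$ is not weakly isomorphic to $W^0_0$. Your explicit check of the edge density just spells out what the paper leaves implicit.
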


\begin{corollary}
\label{cor-grand}
For every $\alpha\in (0,1)$, the graphon $W_{\mu^r_\alpha}=W_{\lambda,\to\alpha}$,
which is associated with the permuton $\mu^r_\alpha$, is not finitely forcible.
\end{corollary}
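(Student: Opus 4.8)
The plan is to deduce Corollary~\ref{cor-grand} directly from Theorem~\ref{thm-quasi}. The key observation is that the graphon $W_{\mu^r_\alpha}$ associated with the permuton $\mu^r_\alpha$ is exactly $W_{\to\alpha}$ for a suitable choice of the ``inner'' graphon $W$, and then Theorem~\ref{thm-quasi} forces $W$ to be weakly isomorphic to $W^0_0$, contradicting the actual structure of $\mu^r_\alpha$.

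First I would identify the inner graphon. Recall that $\mu^r_\alpha=\sum_{i=1}^\infty(1-\alpha)\alpha^{i-1}\lambda_{[1-\alpha^{i-1},1-\alpha^i]\times[1-\alpha^{i-1},1-\alpha^i]}$, i.e.\ its support consists of a sequence of shrinking squares along the diagonal, with mass uniformly distributed inside each square. A $\mu^r_\alpha$-random permutation of order $k$ is obtained by first distributing the $k$ sampled points among the squares (the $i$th square receiving each point independently with probability $(1-\alpha)\alpha^{i-1}$), and then, conditioned on which points fall in which square, the points inside a single square are distributed uniformly and independently, so they induce a \emph{uniformly random permutation} of that block; points in different squares are ordered consistently in both coordinates (earlier square means smaller $x$ and smaller $y$). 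Translating this to the associated graph $G_\pi$: two elements in different squares are never an inversion, hence non-adjacent, while two elements in the same square form an inversion with the probability that two uniform random points in a square are ``decreasing'', i.e.\ with probability $1/2$ in the limit. Thus $W_{\mu^r_\alpha}$ is weakly isomorphic to $W^0_{1/2,\to\vec a}$ with $a_i=(1-\alpha)\alpha^{i-1}$, which in the paper's notation is precisely $W^r_{1/2,\alpha}=W^0_{1/2,\to\alpha}=(W^0_{1/2})_{\to\alpha}$. Equivalently, taking $W=\lambda$ (the constant-$1/2$ graphon, written $W_{\lambda,\to\alpha}$ in the statement), we have $W_{\mu^r_\alpha}=W_{\lambda,\to\alpha}$ up to weak isomorphism.

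Next I would apply Theorem~\ref{thm-quasi} with this $W$. Since $W=W^0_{1/2}$ is not weakly isomorphic to $W^0_0$ (e.g.\ $d(K_2,W^0_{1/2})=1/2\neq 0=d(K_2,W^0_0)$), the theorem immediately yields that $W_{\to\alpha}=W_{\mu^r_\alpha}$ is not finitely forcible. One should also check the trivial point that finite forcibility is a property invariant under weak isomorphism: if some graphon weakly isomorphic to $W_{\mu^r_\alpha}$ were finitely forcible, then $W_{\mu^r_\alpha}$ would be too, since finite forcibility is defined up to weak isomorphism in the first place. So the argument is complete.

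I do not expect any real obstacle here; the only mildly delicate step is the first one, namely verifying carefully that sampling from $\mu^r_\alpha$ and then forming the permutation graph really does produce the same limit as the $W_{\lambda,\to\alpha}$-random graph — that the probability of an inversion between two points in a common diagonal square is $1/2$ (by symmetry of the uniform distribution on a square under reflection) and $0$ for points in distinct squares, and that the within-square samples are independent across squares. This is a routine consequence of the definitions of $\mu$-random permutations and of the correspondence $\pi\mapsto G_\pi$ already set up in Section~\ref{sect-notation}, so the corollary follows with essentially no extra work beyond bookkeeping.
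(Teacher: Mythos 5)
Your overall strategy is the paper's: Corollary~\ref{cor-grand} is presented as an immediate consequence of Theorem~\ref{thm-quasi}, obtained by writing $W_{\mu^r_\alpha}$ as $W_{\to\alpha}$ for an inner graphon $W$ that is not weakly isomorphic to $W^0_0$. However, your identification of that inner graphon is wrong, and it is exactly the step you single out as the delicate one. You argue that, since two points in the same diagonal square form an inversion with probability $1/2$, the block converges to the constant-$1/2$ graphon, so that $W_{\mu^r_\alpha}$ would be weakly isomorphic to $W^r_{1/2,\alpha}=(W^0_{1/2})_{\to\alpha}$. This is false: the inversion indicators of the pairs within one square have marginal probability $1/2$ but are far from independent. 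Conditioned on landing in a fixed square, the sampled points are i.i.d.\ uniform on that square, so they induce a \emph{uniformly random permutation}, and the corresponding permutation graphs converge to $W_\lambda$ --- the $\{0,1\}$-valued graphon on the two-dimensional vertex space $[0,1]^2$ given by joining $(x,y)$ and $(x',y')$ iff $(x-x')(y-y')<0$ --- not to $W^0_{1/2}$. These two graphons are not weakly isomorphic: every induced subgraph of a permutation graph is a permutation graph, hence perfect, so $d(C_5,W_\lambda)=0$, whereas $d(C_5,W^0_{1/2})>0$. This is precisely why the statement reads $W_{\mu^r_\alpha}=W_{\lambda,\to\alpha}$ rather than $W^r_{1/2,\alpha}$, and why $W_\lambda$ is discussed in the conclusion as a candidate ``2-dimensional'' graphon.

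The error does not affect the conclusion, because Theorem~\ref{thm-quasi} applies to an arbitrary inner graphon $W$, and $W_\lambda$ is not weakly isomorphic to $W^0_0$ since $d(K_2,W_\lambda)=1/2\neq 0$. So once you replace $W^0_{1/2}$ by $W_\lambda$ (keeping the correct parts of your sampling analysis: no edges between points in distinct squares, i.i.d.\ uniform points within each square, and the remark that finite forcibility is invariant under weak isomorphism), your argument becomes exactly the paper's intended one.
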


\section{Conclusion}
\label{sec:concl}

We have shown that graphons associated with finitely forcible permutons need not be
finitely forcible.
Question~11 in~\cite{bib-lovasz11+} is whether there exists
a ``2-dimensional'' finitely forcible graphon.
Such graphons naturally arise from permutons.
In~\cite{bib-comp},
there is a construction of a finitely forcible graphon with the Minkowski dimension of
the associated topological space of typical points being two but the space is not connected.
Our discussions with the authors of~\cite{bib-lovasz11+}
led to the intuition that the graphon $W_{\lambda}$ (as defined at the end of Subsection~\ref{subs-graphon}) is a good candidate
for a finitely forcible graphon with the associated space being connected and having dimension two.

\begin{problem}
Is the graphon $W_\lambda$ associated with the permuton $\lambda$ finitely forcible?
\end{problem}

More generally, we suspect that all graphons associated with permutons $\mu_M$
constructed at the end of Section~\ref{sect-finite} might be finitely forcible.

\begin{problem}
Let $M$ be a square matrix of order $k$ with entries being non-negative reals such
the sum of the entries in the $i$-th row is equal to that in the $i$-column and
the sum of all the entries of $M$ is one.
Is the graphon $W_{\mu_M}$ associated with the permuton $\mu_M$ finitely forcible?
\end{problem}

\section*{Acknowledgements}

This work was done when the second author was visiting the University of Warwick.
The hospitality provided to him by the university is strongly appreciated.
The authors would also like to thank Jan Volec for his comments on the topics related to those covered in the paper.

\end{document}